\renewcommand{\epsilon}{\varepsilon}
\renewcommand{\phi}{\varphi}
\newcommand{\su}{\subseteq}
\newcommand{\rest}{\restriction}
\renewcommand{\a}{\alpha}
\renewcommand{\b}{\beta}
\newcommand{\g}{\gamma}
\newcommand{\ku}{\kappa}
\renewcommand{\d}{\delta}
\renewcommand{\l}{\lambda}
\renewcommand{\k}{\kappa}
\newcommand{\ka}{\kappa}
\newcommand{\xig}{\Upsilon} 
\newcommand{\miq}{\partial} 
\newcommand{\D}{\partial}
\newcommand{\PP}{\mathbb P}
\newcommand{\z}{\zeta}
\newcommand{\om}{\omega}
\newcommand{\lng}{\langle}
\newcommand{\rng}{\rangle}
\newcommand{\ov}{\overline}
\newcommand{\sm}{\setminus}
\newcommand{\Li}{{\operatorname {\chi_\ell} }}
\newcommand{\col}{{\operatorname {Col}}}
\newcommand{\dom}{{\operatorname {dom}\,}}
\newcommand{\cf}{{\operatorname {cf}}}
\newcommand{\otp}{{\operatorname {otp}}}
\newcommand{\ran}{{\operatorname  {Rang}}}
\newcommand{\stt}{{\rm st}}
\newcommand{\pcf}{\operatorname {pcf}}
\newcommand{\Reg}{\rm {Reg}}
\newcommand{\force}{\Vdash}
\newcommand{\imply}{\Rightarrow}
\newtheorem{theorem}{Theorem}[section]
\newtheorem{conclusion}[theorem]{Conclusion}
\newtheorem{corollary}[theorem]{Corollary}
\newtheorem{lemma}[theorem]{Lemma}
\newtheorem{definition}[theorem]{Definition}
\newtheorem{claim}[theorem]{Claim}
\author{Saharon Shelah}
\thanks{The author thanks the Israel Science Foundation for partial
support of this research, Grant no. 1053/11. Publication 1052}
\address{Department of Mathematics\\
Hebrew University of Jerusalem\\
Jerusalem\\
9190401  
 Israel}
\title[Lower Bounds]{Lower bounds on coloring numbers from hardness hypotheses in pcf theory}
\renewcommand{\a}{\alpha}
\renewcommand{\b}{\beta}
\renewcommand{\g}{\gamma}
\renewcommand{\d}{\delta}
\renewcommand{\l}{\lambda}
\renewcommand{\k}{\kappa}
\renewcommand{\D}{\partial}
\renewcommand{\z}{\zeta}
\begin{document}
\makeatletter\def\shfiuwefootnote{\gdef\@thefnmark{}\@footnotetext}\makeatother\shfiuwefootnote{Version 2016-02-18\_11. See \url{https://shelah.logic.at/papers/1052/} for possible updates.}
\maketitle

\begin{abstract}
We prove that the statement ``for every infinite cardinal $\ku$, every graph with list-chromatic number 
$\ku$ has coloring number at most $\beth_\om(\ku)$" proved by Kojman \cite{koj}  using 
 the RGCH theorem 
\cite{sh:460} implies the WRGCH theorem, which is a weaker relative of the RGCH,  via a short forcing argument. 

Similarly, 
 a better upper bound than $\beth_\om(\ku)$ in this 
statement  implies  stronger (consistent) forms of the WRGCH theorem, 
  the consistency of whose negations  is wide open. 

Thus, the optimality of  Kojman's upper bound is 
 a purely  cardinal arithmetic problem,  and, as discussed below,
 is  hard to decide.  
\end{abstract}

\section{Introduction}

Recall that  the \emph{list-chromatic} or \emph{choosability} 
number of a graph $G=\lng V, E\rng$ is $\k$ if $\k$ is the least cardinal
such that for any assignment of lists of colors $L(v)$ to all vertices
$v\in V$ such that $|L(v)|\ge \k$ there exists a proper vertex coloring $c$ of $G$ with colors
from the lists, namely $c(v)\in L(v)$ for all $v\in V$. A graph $G$ has \emph{coloring number} $\k$ if $\k$ is the least cardinal such that 
 there exists a well-ordering $\prec$ on $V$ such that a vertex 
$v\in V$ is joined by edges to only $<\k$ vertices $u$ satisfying $u\prec v$.

Alon \cite{A} proved that  every finite graph with list-chromatic number $n$ has coloring number at most  $(4+o(1))^n$ and this bound is tight up to a factor of $2+o(1)$ 
by \cite{ERT}. 

In \cite{koj} Kojman used the Revised GCH theorem from  cardinal arithmetic
 \cite{sh:460} to prove in  ZFC the upper bound of  
   $\beth_\om(\ku)$  on the
coloring number of any  graph with a  list chromatic number $\le
\ku$, where $\beth_\om(\ku)$ is the cardinal gotten by applying the exponent function to $\ku$ infinitely many times.\footnote{Formally, $\beth_0(\ku)=\ku$, $\beth_{n+1}(\ku)=2^{\beth_n(\ku)}$ and $\beth_\om(\ku)=\lim_n\beth_n(\ku)$.}

By Erd\H os and Hajnal \cite{eh} from 1966,  
if the GCH is assumed,
 $\ku^{++}= (2^\ku)^+=(\beth_1(\ku))^+$ 
bounds the coloring number of every graph with 
 list-chromatic number $\ku$ for every  infinite $\ku$.
It is now known 
that  much weaker axioms than the GCH --- certain weak consequences of the Singular Cardinals
Hypothesis --- imply the same  upper bound  (see the second section in \cite{koj}), 
so in ``many" models of set theory, the upper bound is $(2^{\ku})^+$. 
 Komjath \cite{kom} recently improved the  GCH  upper bound to $2^\ku=\ku^+$, 
 constructed models of the GCH  in which $\Li(G)=\col(G)$ for every graph with infinite $\Li(G)$ and showed that in MA models $2^\ku$ is required. 

The sharp contrast between the  single exponent 
  in the bound  for the finite case, 
or  in the bound for the 
infinite case in the presence of  
 mild cardinal arithmetic  axioms,  and the
 infinite tower of exponents  in  the  ZFC bound, 
 led Kojman 
 to ask whether the upper bound  $\beth_\om(\ku)$ could be lowered in ZFC. He also asked   whether 
 the use of the RGCH  in proving his $\beth_\omega$  bound was necessary. 

 We prove here  that: 
 \begin{enumerate}
 	\item
 Kojman's $\beth_\omega$
upper bound implies the so called  Weak Revised GCH theorem (WRGCH) in pcf theory.
   \item
  better upper bounds  imply open 
strengthenings of the WRGCH theorem.
 \end{enumerate}
 
All implications above are via  standard forcing arguments. 
 
 Thus, improving Kojman's 
 upper bound on the coloring number of a graph in terms of its list-chromatic number  will be at least as hard as improving the WRGCH theorem.  In particular, 
a  better upper bound  cannot be 
gotten with only  graph-theoretic 
arguments.

Note that consistency results in pcf are hard; 
only recently Gitik \cite{Git15} succeed to make a 
remarkable  advance: for a countable set
$ {\mathfrak a} $ of regular cardinals,
$ {\rm pcf}( {\mathfrak a} ) $ 
may be  uncountable.
Grand as it is, this is a far cry from what is needed to show that the WRGCH cannot be improved. If, however,   all 
relevant strengthenings of the WRGCH  are indeed not provable in ZFC,
 then Kojman's $\beth_\om$ bound is 
optimal (a more detailed discussion of this is given below).

 \medskip
 
 We thank Menachem Kojman for asking us the
 questions and for reading two earlier versions of this proof and the present one. 
 
\subsection{Description of the reduction}  
  
\begin{definition} \label{wrgch} The Weak Revised GCH theorem,
	 WRGCH, is the statement that for 
every strong limit cardinal $\mu>\aleph_0$, 
e.g. $\beth_\omega$, and $\lambda > \mu$, for 
some $\kappa<\mu$ there is no  sequence 
$\ov \l = \lng \ov\l_i:i<\mu\rng$ of finite sequences of regular cardinals in $(\mu,\l)$ such that $J_{<\l}(\ov \l)\su [\mu]^{<\k}$. 
\end{definition}

 Here, the $\pcf$ operator is extended to sets  $\mathfrak a$ of finite sets (as above, we identify a finite sequence of cardinals with its
range) 
by
letting $\pcf(\mathfrak a)$ be interpreted as   $\pcf(\bigcup \mathfrak a)$ and similarly $J_{\l}(\mathfrak a)$ stands for $\{\mathfrak b: \mathfrak b\su \mathfrak a \,\& \, \max\pcf(\mathfrak b)<\l\}$.

The WRGCH is a straightforward consequence of the
 Revised GCH theorem \cite{sh:460}. 

Now  consider, for a natural number $m
\le  
1$,  
the following two closely related  statements 
(even equivalent, see below) 
with parameter $m$,
which are stronger than the WRGCH:
  
 \begin{itemize}
 \item[$\oplus^1_m$] there are no cardinal $\ku$ and 
   set $\mathfrak a$ of $\beth_m(\ku)$ many finite sequences of 
  regular cardinals, each larger than 
 $\beth_m(\ku)$, such that  $J_{<\sup ( \mathfrak 
    a)}[\mathfrak a] \su [\mathfrak a]^{<\ku}$,  
i.e.  $\mathfrak b\in [\mathfrak a]^\ku$ 
    implies that $\max\pcf (\mathfrak b)\ge
 \sup (\mathfrak a ) $.  
\end{itemize}

\begin{itemize}
 \item[$\oplus^2_m$] there are  no cardinals 
$\ku$ and 
 $ \xig  $ 
satisfying 
$ \miq:=\beth_m(\ku)\le  
\xig 
$ and a family of sets 
$\mathcal A\su 
[\xig]^ \miq $ 
 such that  
$|A\cap B|<\ku$ for all distinct
$A,B\in \mathcal A$ and $|\mathcal A|>
\xig $ 	

\end{itemize} 

The status of the statements above is as follows. If $m<n$ then
$\oplus^i_m$ implies $\oplus^i_n$ for $i\in \{1,2\}$.
All $\oplus^i_m$ hold in models of the GCH or even of 
just the strong hypothesis (see \cite{sh:420}, \S 6),  so are consistent with the 
axioms of ZFC. 

The question  for which $m$ is  $\oplus^i_m$ a theorem of
 ZFC is wide open, that is, for all $m\ge 1$, neither a ZFC proof\
nor a consistency of the negation is known at 
the moment. 
 The WRGCH, however, is a theorem of ZFC, as it follows 
trivially from the RGCH theorem. 

\medskip 

Lowering  $\beth_\om(\ku)$ in Kojman's upper bound to 
 $\beth_n(\ku)$ for
some $n<\om$   is at least as 
hard as proving the equivalent  
statements $\oplus^1_m$ and  $\oplus^2_m$  for   $m=2n+1$. 
The reason for this is that
 if the configuration that is 
  forbidden by e.g. $\oplus^2_m$ does exists in some  model $\mathbf V$ of ZFC then in some forcing extension of $\mathbf V$ there is a graph with list-chromatic number $\theta$ and coloring number $>\beth_n(\theta)$, for some $\theta>\ku$. 
The relation  $m= 2n+1$ can probably be tightened, but we made no effort to do so.

  Also, if it is assumed to the contrary that the configuration that is 
  forbidden by the WRGCH does exists
 in some universe $\mathbf V$ of set-theory,  then  
  in some forcing  extension $\mathbf V^{\mathbb P}$ of $\mathbf V$ there is 
   a graph  with  list-chromatic number   $<\ku$ and coloring number $> \beth_\om(\ku)$,  
   contrary to the $\beth_\omega$ upper bound. Thus, 
     the $\beth_\om$ graph-theoretic bound implies the WRGCH.

\bigskip


   We discuss next the  pcf-theoretic statements and  
   explain further their connection to upper bounds on coloring numbers.

\medskip
%
%
%
%

Let $\k\le 
\miq < \xig
 <\l=\cf(\l)$ be cardinals. 
Consider the statement:

\begin{itemize}
\item[$(st)^1_{\k,
\miq,\xig	
,\lambda}$] there is a $\mathcal A\su
[\xig]^ \miq $ 
of cardinality $\l$ such that if $A_1\not=A_2$ belong to
  $\mathcal A$ then $|A_1\cap A_2|<\k$.
\end{itemize}

We agree that if $\lambda=
\xig^+ $ 	
 we may omit it and if $\mu=
\xig $, 
$\l=
\chi^+
=\mu^+$ then we also may omit them, so the typical case
$(st)^1_{\k_,\mu}$ is the existence of a family $\mathcal A\su
[\mu]^\mu$ of cardinality $\mu^+$ which is a \emph{$\k$-family},
that is, the intersection of any two distinct members of $\mathcal A$
has cardinality $<\k$.

Why is using  $(st)^1_{\k,\mu}$ reasonable when $\beth_m(\k)\le
  \mu<\beth_{m+1}(\k)$? The history of this question is rich. 
  In particular, Baumgartner got by forcing,
  without using large cardinals, the consistency of $(st)^1_{\k,\mu}$ with 
  $\k=\k^{<\k}<\mu <2^\k$, so  here $m=0$. 
  
  We are, however, 
   interested in the cases $m\ge 1$, which are closely related
to pcf problems.  

Consider the pcf statement, 

\begin{itemize}
\item[$(*)^2_{\k,\mu,\chi,\l}$] $\k<\mu<\chi<\l=\cf(\l)$ and there is a
  sequence $\ov {\mathfrak a}=\lng \mathfrak a_i:i<\mu\rng$ of finite sets of regular cardinals with each ${\mathfrak a}_i
  \su (\mu,\chi)$
  and such that  $\l=\max\pcf(\bigcup_i\mathfrak a_i)$ and $J_{<\l}[\overline {\mathfrak a}]=\{u\su \mu: \pcf (\bigcup_{i\in u}{\mathfrak a}_i)\su \l\}$  
  (so really $\chi \gg \mu$. The main case, and the one  we shall deal with, for transparency,  is $\l=\chi^+$.)
\end{itemize}

Why  $(st)^1_{\k,\mu,\chi,\lambda}$ and $(*)^2_{\k,\mu,\chi,\l}$ are related to each other and to graph colorings?

\begin{itemize}
\item[$(*)_0$]
 if $\mathcal A\su [\chi]^\mu$ has cardinality $>\chi$,
  and is a $\k$-family, $\k\le \mu\le \chi$ then the natural  bi-partite graph associated to 
  $\mathcal A$ and denoted $G_{\mathcal A}$, 
  (see definition \ref{graphfromfamily} below)
  has coloring number $\ge \chi^+$ . 
\end{itemize}

So finding such $\mathcal A$ with small list-chromatic number, say
$\ku$, with  $\beth_n(\ku)\le \l =\chi^+$, will give consistent lower bounds, 
which is the purpose of this note. The main point here is that the list-chromatic number of such graphs can be lowered by applying the internal forcing axiom from \cite{sh:546}  (see also  \cite{sh:1036}),  a natural generalization of MA.

Observe that 

\begin{itemize}
\item[$(*)_1$]
For $ {\ell} = 1,2 $,
\end{itemize}
\begin{itemize}
\item[(a)]
 if
$(\stt)^\ell_{\k_1,\mu_1,\chi 
}$ and $\k_1\le \k_2\le
  \mu_2\le \mu_1$ then $(*)^\ell_{\k_2,\mu_2,\chi}$. 

\item[(b)]  
if $ (\stt)^{\ell}   
_{\kappa _1, \miq_1 , \xig_1, \lambda_1 } $ 
and 
$ \kappa _1 \le \kappa_2 < \mu _2 \le \mu_1 $ 
and 
$  \xig_1 \le \xig_2 < \lambda _2 \le \lambda _1 $ 
then 
 $ (\stt)^{\ell}   
_{\kappa _2, \miq_2  \xig_2, \lambda_2 } $. 

\item[$(*)_2$] For $ {\ell} = 1,2 $,
\begin{itemize}
\item[(a)] $(\stt)^2_{\k,\miq,\xig}$ 
implies $(\stt)^1_{\k,\miq,\xig}$
and 
 $(\stt)^2_{\k,\miq,\xig, \lambda }$ 
 implies $(\stt)^1_{\k,\miq,\xig , \lambda }$. 
\item[(b)] If $(\stt)^{\ell}_{\k,\miq,\xig}$ 
and $\xig=\xig_1^+$, $\mu_1=\min\{\miq,\xig_1\}
  \ge \k$ (so $\ell=1
\wedge \miq = \xig 
\imply \miq_1=\miq$) then $(\stt)^\ell
  _{\k,\miq_1,\xig_1}$. 
\item[$(c)$] If $(\stt)^\ell_{\mu 
\k, \miq \xig  
,\l}$ 
and $\miq < \xig $  
  and $\xig$ is a limit cardinal of  
cofinality $\not=\cf(\miq)$ 
and $ \not= \cf(\lambda $  
 then for every large enough 
   $\xig_1< \xig $ 
we have 
$(\stt)^\ell_{\k,\miq,\xig_1,\l}$. 
\end{itemize}
 \end{itemize}
 \noindent Also 
 \begin{itemize} 
\item [$(*)_3$] If $\miq^{<\k}<\l=\cf(\l),\, 
 \k=\cf(\k)>\aleph_0$ and 
$(\stt)^1_{\k,\miq,\xig,\l}$ 
then
      $(\stt)^2_{\k,\miq,\xig,\l}$. 
\end{itemize}

Why? By  \cite{sh:410}, 6.1 with $\l,\xig,\miq,\k^*$
 here substituting $\miq^*,\miq,\k,\sigma$ there. 
Similarly, by Theorem 6.2 in \cite{sh:410} 
we have $\oplus^1_m \iff \oplus^2_m$ for all $m\ge 1$. 

Let 
\begin{itemize} 
\item[$(*)^{0,n}_{\k,\mu}$] $\mu\in (\beth_n(\k),\beth_{n+1}(\k))$.
\end{itemize}

So the problem with the consistency of
 $(\stt) 
^1_{\k,\mu , \xig } + 
(\stt) 
^{0,n}_{\k,\miq}$ is having $(\stt) 
^2_{\k,\miq,\xig} +
(\stt)  
^{0,n}_{\k,\miq}$.

We may note that 
clause (b) is justified by the RGCH 
 and $\l=\cf(\l) $ is usually natural. 

An example, then,  of how this note clarifies the question of whether the upper
bound of $\beth_\om(\ku)$ is tight is: 

\begin{conclusion} \label{ABC} We have $(A)\iff (B) \iff (C)$ where:

\begin{enumerate}
\item[(A)] For every $n$ in some forcing 
extension of  $\bf V$  there
  are $\k$, $\miq 
=\beth_n(\k)$, 
$ \xig > \miq $ 	
and a $\k$-family
  $\mathcal A\su
[\xig ]^\miq $ 
 of cardinality 
$ > \xig $.	

\item[(B)] For every $n$ in some forcing extension of $\bf V$ there are
  $\k$,
$ \miq = 
\beth_n(\k)$ and a set $\mathfrak a$ of 
$ \miq $ 
 finite sets of  regular cardinals
$ \miq $ 
such that  
$J_{<\sup (\mathfrak a)}[\mathfrak a]  
\su [\mathfrak a]^{<\k}$, i.e.  
$\mathfrak b\in [\mathfrak a]^\k$ 
    implies that 
$\max\pcf (\mathfrak b)\ge \sup (\mathfrak a)$. 

\item[(C)] For every $n$ in some forcing extension of $\bf V$ there are
  $\k$, 
$ \miq = 
=\beth_n(\k)$ and a graph $G$ with list-chromatic
  number $\k$ and coloring number 
$ >  \miq $. 
\end{enumerate}
\end{conclusion}

\begin{proof} 
[Proof of \ref{ABC}] $(A)\implies (B)$ follows from \cite{sh:410}, 6.1  (and $(B)\implies (A)$ is obvious by $(*)_2$ above). 

$(A)\implies (C)$ is done in Theorem 2.1 below, where we start letting $\theta=2^\k$, or, if $\k$ is regular, also $\theta=2^{<\k}$ suffices to get the assumptions of 2.1.
For every $n$, $(A)_n\iff (B)_n$ and $(A)_{2n+2}\imply (B_n)\imply(C)_n$.

To prove $(C)\implies (B)$ it suffices to  note, (use $\theta=\theta^{<\theta}$) 
that $(a)_{\l,\theta,\k}\imply (b)_{\l.\theta,\k}$ in Claim \ref{comb}. See \cite{LeSh:527} and use the  proof of compactness in   singulars \cite{sh:266} and \cite{sh:668}, Section 2.  
\end{proof}

In conclusion,  the upper bound $\beth_\om(\ku)$ cannot be lowered without making 
 substantial  progress in pcf theory. If, on the other hand, the negations of $\oplus^2_m$ are consistent for all $m$, then Kojman's $\beth_\om(\ku)$ upper bound is optimal.


\subsection{Should we expect consistency or better pcf theorems?}


Let us mention first the known consistency results. Only quite recently Gitik \cite{Git15}
succeeded to prove, from the consistency of large cardinal axioms, 
 the consistency of a countable set of regular
cardinals $\mathfrak a$ with 
$\pcf (\mathfrak a)$ uncountable, but really
 just $|\pcf (\mathfrak a)|=\aleph_1$.
 In particular he got $(\stt)  
^2_{\aleph_0,\aleph_1,\mu}$. While a great 
achievement, this is still very distant from what we need. 

For $\k>\aleph_0$ there are  no 
known consistency results. 
After  the RGCH was proved in the 
early nineties much effort (at
least by the present author) was made 
 to lower $\beth_\om$ --- so far without any success. However  
 in some other
	directions
 there were advances (\cite{sh:824,sh:898,GiSh:1013}).

	So do we expect consistency or
 ZFC results? Wishful thinking, or, if you prefer, 
	the belief that ``set theory behaves in an interesting way"  
	 suggests that  the answer to ``for which $m$ 
	 the statement $\oplus^1_m$ holds in ZFC" 
	 should  turn out to be somewhere in the middle,
	  e.g.   $m=4$ (or $m=957$, for that
	matter). More seriously, the situation is wide open. Perhaps, as on
	the one hand the ZFC $\beth_\om(\ku)$ gap has not changed for a long time
	now, while on the other hand there has been a recent breakthrough in
	consistency, there is some sense in viewing  consistency as more
	likely.

	\section{Proofs}

	\begin{theorem} Suppose that 
	$\k<\theta=\theta^{<\k}$, 
$ \xig > \miq	=\beth_{2\ell+1}(\theta)$ and there 
	a $\k$-family ${\mathcal A}\su
	[\xig]^\miq $ 	
	 of size 
	$ \xig^+ $ 	
{\em Then} in some forcing extension  there is 
a graph $G$ with list-chromatic number $\theta$ and coloring number $>\beth_\ell(\theta)$. 
\end{theorem}

As promised in the introduction, 
we may prove 
 a weak version of the 
{\rm RGCH} theorem from a bound on list-chromatic 
number. 

\begin{theorem} Suppose that $\k<\theta=\theta^{<\k}$,
$ \xig > \miq	
> \beth_{\om}(\theta)$ and there is
a $\k$-family $\mathcal A\su 
[ \xig]^\miq $ 	
 of size 
$ \miq^+ $.	
Then in some forcing extension  there is 
a graph $G$ with list-chromatic number $\theta$
 and coloring number $\ge 
\miq	
>\beth_\om(\theta)$. 
\end{theorem}

\noindent\textbf{Convention}: For this section we fix
  $\aleph_0\le \k<\theta$. 

We shall need the following definition from 
 \cite{sh:546} p. 5. (See also \cite{sh:1036} for
 more on this and other forcing axioms). We  recall one
 implication from 2.4 below: if $\mathcal A$ is as in 2.1
 then $G_{\mathcal A}$, define in 2.4,
 has coloring number 
$ > \miq $. 

\begin{definition} \label{b9}
 A forcing notion $\mathbb P$
 satisfies $*^\omega_\mu$
where $ {\aleph_0} 	
<\mu=\cf(\mu)$ if Player I 
(the "completeness" player) has 
a winning strategy in the following game in 
$\omega$ moves:

At step $k$: If $k\not=0$ then 
Player I chooses $\lng p^k_{1, \alpha 
}:\alpha 
<\mu^+\rng$ with
 $p^k_{1, \alpha 
}\in \mathbb P$ such 
that for all $\xi<\z$ and for 
club-many $ \alpha 
<\mu^+$ in 
 $S ^{\mu^+}_\mu$,  $p^{k-1}_{2, \alpha 
}\le p^k_{1, \alpha 
}$,
 and also chooses a function 
 $f_k:\mu^+\to \mu^+$ which is 
regressive on a club of $\mu^+$. If $k=0$
  Player  I 
chooses $p^0_{1, \alpha i
}=\emptyset_{\mathbb P}$
 and $f^0$ as the identically 
  $0$ function on $\mu^+$. 

Player II chooses $\lng p^k_{2,i}
:i<\mu^+\rng$ such that for club 
many $\alpha 
<\mu^+$ in $S^{\mu^+}_\mu$
 it holds that $p^k_{1,\alpha 
}\le p^k_{2,\alpha 
}$. 

Player I wins 
play of 
if there is a club $E\su \mu^+$
 such that for all 
$ \alpha < \beta $ 
 in 
$E\cap S^{\mu^+}_\mu$,
 if $f_k 
(\alpha 
)=f_k(\beta 
)$ for all  
$k <\omega$ then there is an upper
 bound in $\mathbb P$ to the set 
$\{p^k_{1.\alpha 
}:k<\omega\}\cup\{p^k_{2,\beta 
}:k<\omega\}$. 
\end{definition}

\begin{definition} \label{graphfromfamily} 
\begin{enumerate}
\item $\mathcal A$ is  a $\k$-family 
 of sets when $|A\cap
B|<\k$ for all distinct 
$A,B\in \mathcal A$ and is a 
$(\theta,\ka)$-family if in 
addition, $|A|\ge\theta$ for all  $A\in \mathcal A$ and, for notational transparency, $\mathcal A\cap \bigcup \mathcal A=\emptyset$.

\item Suppose $\mathcal A$ is a $\k$-family of sets.  
We define the 
(bipartite) graph
 $G_{\mathcal A}$. Its 
  set of 
vertices  is
$V_{\mathcal A}=\mathcal A\cup  
\bigcup \mathcal A$. We denote
$\bigcup \mathcal A$ by 
$pt (\mathcal A)$. The edge set $E_{\mathcal
  A}$ is $\{\{v,A\}: v\in A\in \mathcal A\}$.
 When $\mathcal A$ is fixed or clear from context, 
   we refer to  $G_{\mathcal A}$ as  $\lng V,E\rng$. 
\end{enumerate}
\end{definition}

\begin{definition} \label{b15}
For a $(\theta,\k)$-family $\mathcal A$, a set $Y\su G_{\mathcal A}$ is \emph{closed} (pedantically, $\mathcal A$-closed, but the identity of $\mathcal A$ is clear from the context) if:
\begin{enumerate}
\item  [(a)]  
$A\not= B\in Y\cap \mathcal A\imply A\cap B\su Y$.
\item [(b)] 
If $A\in \mathcal A$ and $|A\cap Y|\ge \k$ then $A\in Y$. 
\end{enumerate}
A sub-graph $G'$ of $G_{\mathcal A}$ 
is closed if its set of vertices is closed. 
\end{definition}

\begin{claim} 
If $\mathcal A$ is a $(\theta,\k)$-family 
and $\l^\k=\l\ge \theta$ then every sub-graph of
 $G_{\mathcal A}$  of size $\l$ is contained in a
closed sub-graph of the same size.
 Moreover, if $Y_1\su G$ is closed
and $X\su Y_1$ is of size $\l$, 
then 
there is a closed $Y\su Y_1$ of
cardinality $\l$ such that $X\su Y$. 
\end{claim}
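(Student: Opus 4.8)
The plan is to build the closed superset $Y$ by a standard closure process, iterating the two closure requirements $\omega$ many times and taking a union, while controlling cardinality at each step by the hypothesis $\lambda^\kappa=\lambda$. I will prove the ``moreover'' statement (the relativized version inside a given closed $Y_1$), since taking $Y_1=G_{\mathcal A}$ recovers the first sentence. So fix a closed set $Y_1$ and a subset $X\subseteq Y_1$ with $|X|=\lambda$.

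First I would define a one-step closure operator. Given a set $Z\subseteq Y_1$, let $\Cal F(Z)$ be $Z$ together with (i) all the pairwise intersections $A\cap B$ for distinct $A,B\in Z\cap \mathcal A$, and (ii) every $A\in \mathcal A\cap Y_1$ such that $|A\cap Z|\ge \kappa$. The key point is the cardinality bound $|\Cal F(Z)|\le |Z|^\kappa$ whenever $|Z|\ge\theta$: part (i) contributes at most $|Z|^2\le|Z|$ new vertices, and for part (ii) the crucial observation is that, because $\mathcal A$ is a $\kappa$-family, any $A\in\mathcal A$ with $|A\cap Z|\ge\kappa$ is determined by a $\kappa$-sized subset of $Z\cap \bigcup\mathcal A$ — indeed, if $A\ne A'$ both met the \emph{same} $\kappa$-element subset $s\subseteq Z$, then $s\subseteq A\cap A'$ would give $|A\cap A'|\ge\kappa$, contradicting the $\kappa$-family property. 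Hence the map $A\mapsto$ (some $\kappa$-subset of $A\cap Z$) is injective, so the number of such $A$ is at most $|[Z]^\kappa|\le|Z|^\kappa$. Since $\lambda^\kappa=\lambda$, whenever $|Z|\le\lambda$ we get $|\Cal F(Z)|\le\lambda$.

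Next I would iterate: set $Z_0=X$ and $Z_{n+1}=\Cal F(Z_n)$, and put $Y=\bigcup_{n<\omega} Z_n$. By induction each $|Z_n|\le\lambda$ (using $|X|=\lambda\ge\theta$ and the bound above), so $|Y|\le\aleph_0\cdot\lambda=\lambda$; and $|Y|\ge|X|=\lambda$, giving $|Y|=\lambda$. Clearly $X\subseteq Y\subseteq Y_1$. It remains to check $Y$ is closed. For requirement (1): if $A\ne B\in Y$, they already appear in some $Z_n$, so $A\cap B\subseteq Z_{n+1}\subseteq Y$. For requirement (2): if $A\in\mathcal A$ and $|A\cap Y|\ge\kappa$, then since $\kappa\le\theta$ and $\cf(\kappa)\le\kappa$ one cannot immediately conclude the $\kappa$ points lie in a single $Z_n$ unless $\kappa$ is regular; to handle this cleanly I would instead argue that $\kappa$ many points of $A$ appear in $Y=\bigcup_n Z_n$, and — here is the one subtle spot — if $\kappa$ has uncountable cofinality the $\kappa$ points are cofinally spread and some $Z_n$ already contains $\kappa$ of them, so $A\in Z_{n+1}$; if $\cf(\kappa)=\aleph_0$ I would preempt the problem by strengthening the closure operator to close under intersections and membership of \emph{all} smaller witness sets, e.g. close whenever $|A\cap Z|\ge\kappa'$ for the relevant cofinal sequence, or simply iterate $\cf(\kappa)^+$ many times rather than $\omega$ times. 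Either way, $A\cap Y$ of size $\ge\kappa$ forces $A\in Y$, so $Y$ is closed.

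The main obstacle is precisely this last interaction between the length of the iteration and $\cf(\kappa)$: a plain $\omega$-iteration suffices exactly when $\cf(\kappa)>\omega$, and the general case needs either a longer iteration of length $\cf(\kappa)^+$ (whose cardinality is still $\le\lambda$ provided $\cf(\kappa)^+\le\lambda$, which holds since $\kappa<\theta\le\lambda$) or a reformulated one-step operator that already anticipates the $\aleph_0$-cofinal case. I expect everything else — the cardinality arithmetic and the verification of requirement (1) — to be routine given $\lambda^\kappa=\lambda$ and the $\kappa$-family hypothesis.
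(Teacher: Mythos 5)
The paper offers no proof of this claim at all --- it is stated as routine, followed only by a remark that $\lambda^\kappa=\lambda$ can be weakened to $\mathcal D(\lambda,\kappa)=\lambda$ (citing Kojman's paper, where precisely this kind of closure argument is carried out) --- so there is no in-paper proof to compare against; your closure-iteration argument is the standard intended one, and it is essentially correct. The counting at the heart of it is right: the map sending each $A\in\mathcal A$ with $|A\cap Z|\ge\kappa$ to a chosen $\kappa$-sized subset of $A\cap Z$ is injective by the $\kappa$-family property, giving the bound $|Z|^\kappa$, and the remark in the paper confirms this is the intended mechanism (the $\mathcal D(\lambda,\kappa)$ refinement just replaces all of $[Z]^\kappa$ by a cofinal subfamily in the same injectivity argument). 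Your self-diagnosed obstacle at $\cf(\kappa)=\omega$ is also real, since the paper's convention allows $\kappa=\aleph_0$, where a plain $\omega$-iteration genuinely can fail to satisfy closure condition (2).

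Two caveats. First, of your two proposed fixes, only the longer iteration works. Iterating $\cf(\kappa)^+$ times is fine (for singular $\kappa$ it needs the short supremum argument you gesture at: if every stage has $|A\cap Z_\alpha|<\kappa$, a nondecreasing $\cf(\kappa)^+$-sequence cannot be cofinal in $\kappa$, so $\sup_\alpha|A\cap Z_\alpha|<\kappa$ and $|A\cap Y|\le\cf(\kappa)^+\cdot\sup_\alpha|A\cap Z_\alpha|<\kappa$); simplest of all is to iterate $\kappa^+\le\theta\le\lambda$ times, so that by regularity of $\kappa^+$ any $\kappa$-sized subset of $A\cap Y$ lies in a single stage, and the cardinality is still $\lambda\cdot\kappa^+=\lambda$. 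But your alternative fix --- strengthening the one-step operator to close under $|A\cap Z|\ge\kappa'$ for $\kappa'<\kappa$ along a cofinal sequence --- is broken: the injectivity argument uses the threshold $\kappa$ exactly, since the $\kappa$-family condition only bounds pairwise intersections below $\kappa$, and a fixed set of size $\kappa'<\kappa$ can be contained in arbitrarily many members of $\mathcal A$, so this variant has no cardinality bound. Second, a harmless slip: in step (i) each pair $A\ne B$ contributes the whole set $A\cap B$, of size $<\kappa$, so the count there is $|Z|^2\cdot\kappa=|Z|$ (using $|Z|\ge\theta>\kappa$), not ``$|Z|^2\le|Z|$ new vertices.'' Your relativization to a closed $Y_1$ --- using closedness of $Y_1$ to see that the operator never leaves $Y_1$, and recovering the first sentence by taking $Y_1$ to be the full vertex set --- is exactly right.
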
 

Remark: instead of $\l^\k=\l$ it suffices 
 that $\mathcal D(\l,\k)=\l$, where $\mathcal
D(\l,\k)=\cf([\l]^\k,\supseteq)$, (see \cite{koj}).

\begin{definition}
  Suppose $\theta> \k$ and $\mu$ are  cardinals and 
  ${|\a|^\kappa}<\mu$ for all $\a<\mu$. We
  say that \emph{$Pr_{\theta,\k}(\mu)$ holds}
{\em when} 
for every $(\theta,\k)$-family $\mathcal A$ and
  every [closed] $Y\su G_{\mathcal A} $ of 
cardinality $|Y|<\mu$ the 
  list chromatic number of
 $Y$ is at most $\theta$; that is, for every 
  assignments of lists $L(v)$ to vertices in 
$G_{\mathcal A}$ such
  that $|L(v)|\ge \theta$ there is a 
valid coloring $c\in \prod_v
  L(v)$.
\end{definition}

\begin{claim} \label{3A} Assume that 
$ {\mathcal A} $ is a 
 $ ( \theta , \kappa )$-family, 
$Y\su G_{\mathcal A}$ is closed, 
$\cf(\delta)\not=\cf(\k)$, $\d<\cf(\mu)$ 
and $Z_i\in [Y]^{<\mu}$ 
is 
increasing with  $i<\d$.  
If each $Z_i$ is $\mathcal A$-closed  then
 $Z:=\bigcup_{i<\d} Z_i$ is $\mathcal A$-closed. 
\end{claim}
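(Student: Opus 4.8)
The plan is to verify the two closure conditions for $Z:=\bigcup_{i<\d}Z_i$ separately: the first is routine, and the second reduces to a purely cardinal-arithmetic lemma about increasing unions. Throughout I will use that $\langle Z_i:i<\d\rangle$ is $\su$-increasing, so that any finite set of elements of $Z$ already lies in a single $Z_j$.

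First I would check condition (1). Given distinct $A\not=B$ in $\mathcal A$ with $A,B\in Z$, there is a single $j<\d$ with $A,B\in Z_j$, and since $Z_j$ is $\mathcal A$-closed we get $A\cap B\su Z_j\su Z$. This step uses neither the cofinality hypothesis nor $\d<\cf(\mu)$; the latter is needed only to guarantee $|Z|\le |\d|\cdot\sup_{i<\d}|Z_i|<\mu$, so that $Z$ again lands in $[Y]^{<\mu}$.

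For condition (2) I fix $A\in\mathcal A$ with $|A\cap Z|\ge\k$ and write $A\cap Z=\bigcup_{i<\d}(A\cap Z_i)$, an $\su$-increasing union. If $|A\cap Z_i|\ge\k$ for some $i$, then closedness of $Z_i$ gives $A\in Z_i\su Z$ and we are done. So I would argue by contradiction, assuming $|A\cap Z_i|<\k$ for all $i<\d$. Writing $B_i:=A\cap Z_i$, this produces an $\su$-increasing sequence $\langle B_i:i<\d\rangle$ with each $|B_i|<\k$ but $\big|\bigcup_{i<\d}B_i\big|\ge\k$, and the task is to show this is impossible once $\cf(\d)\not=\cf(\k)$.

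This is the combinatorial heart of the matter, which I would isolate as a lemma: if $\langle B_i:i<\d\rangle$ is $\su$-increasing with $|B_i|<\k$ for all $i$ and $\cf(\d)\not=\cf(\k)$, then $\big|\bigcup_{i<\d}B_i\big|<\k$. First I would pass to a cofinal subsequence of order type $\rho:=\cf(\d)$, a regular cardinal; this preserves the union and keeps all pieces of size $<\k$, so it suffices to show that a length-$\rho$ increasing union of sets of size $<\k$ with total size $\ge\k$ forces $\rho=\cf(\k)$. The key observation is that each partial union $U_\gamma=\bigcup_{i<\gamma}B_i$ satisfies $U_\gamma\su B_\gamma$, hence $|U_\gamma|<\k$ for every $\gamma<\rho$, while $|U_\rho|\ge\k$. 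From $|U_\rho|\le\rho\cdot\sup_{i<\rho}|B_i|$ one immediately gets $\rho\ge\cf(\k)$ (if $\rho<\cf(\k)$ the supremum stays below $\k$ and the product is $<\k$). For the reverse inequality I would split on whether $\sup_{i<\rho}|B_i|=\k$: in that case the $|B_i|$ form a weakly increasing sequence cofinal in $\k$, and the indices witnessing cofinality must themselves be cofinal in $\rho$ (otherwise some $|U_\gamma|$ would already reach $\k$), yielding $\cf(\d)=\cf(\k)$; if instead $\sup_{i<\rho}|B_i|=\lambda<\k$, then $\rho\ge\k$, and an element-counting argument — enumerating $\k$ members of $U_\rho$, each entering at some stage $<\rho$, with those stages necessarily cofinal in $\rho$ lest some $U_\gamma$ have size $\ge\k$ — forces $\cf(\rho)\le\k$ and hence again $\rho=\cf(\k)$. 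The bookkeeping in this last case, carried out uniformly for $\k$ regular and $\k$ singular, is the one place that demands care and is where I expect the main obstacle to lie; once the lemma is in hand, it contradicts $\cf(\d)\not=\cf(\k)$ and thereby completes condition (2).
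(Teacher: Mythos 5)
Your proof is correct and takes essentially the same route as the paper's: verify condition (1) by placing any two members of $\mathcal A\cap Z$ in a common $Z_j$, and condition (2) by finding some $i<\d$ with $|A\cap Z_i|\ge \k$. The only difference is one of detail: the paper simply asserts that second step in one sentence, while you correctly isolate and prove the underlying cardinal-arithmetic lemma (an increasing union of length $\d$ with $\cf(\d)\not=\cf(\k)$ of sets of size $<\k$ has size $<\k$), which is exactly where the hypothesis $\cf(\d)\not=\cf(\k)$ enters and which the paper leaves implicit.
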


\begin{proof}
First, if $A \not= 
B\in Z\cap \mathcal A$ then for 
some $i<\d$ it holds that $A,B\in Z_i$,
but $ Z_i $ is $ {\mathcal A} $-closed 
hence $ A \cap B \subseteq Z_i \subseteq Z $ 
as required (in \ref{b15}(a)). 
 Second, 
if $A\in \mathcal A$ satisfies that 
$|A\cap Z|\ge \k$ then
(because $ \cf( \delta ) \not= \cf ( \kappa ) $)
 for some 
$i<\d$ it holds that $|A\cap Z_i|\ge \k$
 and as $Z_i$ is closed, $A\in Z_i\su Z$.
\end{proof}

\begin{lemma}[Step-up Lemma] \label{stepup} 
Suppose that $ \lambda > 
\mu=\mu^{<\mu}>\k$ and $\theta>\k$.
Assume that 
\begin{enumerate} 
\item [(a)] 
The internal forcing axiom for posets that satisfy
  $*^\omega_\mu$ 
(see Def \ref{b9})	
holds for $<\l$ dense sets.
\item [(b)] 
 $(\forall \a<\mu)(|\a|^\k<\mu)$. 
\item [(c)] 
 $(\forall \a<\l)(|\a|^\k<\l)$
\item [(d)] 
 $Pr_{\theta,\kappa}(\mu)$ holds.
\end{enumerate}
Then $Pr_{\theta,\kappa}(\l)$. 
\end{lemma}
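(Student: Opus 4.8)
The plan is to prove the Step-up Lemma by a forcing argument that "stretches" the property $Pr_{\theta,\k}(\mu)$ up to $\l$, using the internal forcing axiom as the main tool. First I would set up the situation: let $\mathcal A$ be a $(\theta,\k)$-family and let $Y \su G_{\mathcal A}$ be closed of cardinality $|Y|<\l$, together with an assignment of lists $L(v)$ with $|L(v)|\ge \theta$ for all $v\in Y$. The goal is to produce a valid coloring $c\in \prod_{v\in Y} L(v)$. Since $|\a|^\k<\l$ for all $\a<\l$ by hypothesis (3), I would write $Y$ as an increasing continuous union $Y = \bigcup_{i<\cf(|Y|)} Y_i$ of closed subsets $Y_i$ with $|Y_i|<\l$; using Claim \ref{3A}, taking cofinalities $\ne \cf(\k)$ at limit stages, keeps the filtration closed. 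The subtlety is that $|Y_i|$ could be $\ge \mu$, so the inductive hypothesis $Pr_{\theta,\k}(\mu)$ does not apply directly to the pieces; this is precisely what the forcing must overcome.

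The key step is to define a forcing notion $\mathbb P$ whose conditions are partial colorings of $Y$ of size $<\mu$ that are valid (respecting edges) and that are compatible with closed approximations of size $<\mu$. More precisely, a condition $p$ would consist of a closed set $\dom(p)\in [Y]^{<\mu}$ together with a valid coloring $p\colon \dom(p)\to \bigcup_v L(v)$ with $p(v)\in L(v)$, ordered by extension. The central verification is that this $\mathbb P$ satisfies $*^\omega_\mu$, so that the internal forcing axiom of hypothesis (1) applies. Here $\mu=\mu^{<\mu}$ guarantees $|\mathbb P|$-sized bookkeeping works and that conditions have size $<\mu$; hypothesis (2), $|\a|^\k<\mu$ for $\a<\mu$, ensures closed sets of size $<\mu$ stay of size $<\mu$ (closure can only add $\k$-intersections and sets $A$ with $|A\cap Y|\ge\k$, at most $|\a|^\k$ many). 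Crucially, the inductive hypothesis $Pr_{\theta,\k}(\mu)$ is what lets me extend any condition: given a closed $Z\in[Y]^{<\mu}$ containing $\dom(p)$, the restriction of the list assignment to $Z$ admits a valid coloring agreeing with $p$, because colorings of closed sets of size $<\mu$ exist and can be patched. This density of "the domain contains a prescribed point'' is what makes the generic coloring total on $Y$.

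The hard part, and the main obstacle, is verifying that $\mathbb P$ satisfies the completeness game $*^\omega_\mu$, i.e. exhibiting Player I's winning strategy. This is where the closure structure of $G_{\mathcal A}$, the hypothesis $\cf(\d)\ne\cf(\k)$ appearing in Claim \ref{3A}, and the regressive-function/club machinery of the game must be combined: along a club of $i\in S^{\mu^+}_\mu$ where the regressive functions $f_k$ stabilize, the conditions $p^k_{1,i}, p^k_{2,i}$ have domains that cohere, and one forms their union. The point is that $\bigcup_k \dom(p^k_{\cdot,i})$ is a set of size $<\mu$ (each has size $<\mu$, and $\mu=\mu^{<\mu}$ has uncountable cofinality so a countable union of $<\mu$-sets is $<\mu$) whose closure stays $<\mu$ by hypothesis (2); then $Pr_{\theta,\k}(\mu)$ provides a valid coloring of that closed union extending all the individual colorings, which is the required upper bound in $\mathbb P$. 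I would need $\cf(\mu)>\om$ and the stabilization on the club to ensure the countably many colorings are pairwise compatible so that their union is a genuine condition.

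Finally, I would invoke the internal forcing axiom for $<\l$ dense sets: the density requirements ``$v\in\dom(p)$'' for $v\in Y$ form a family of fewer than $\l$ dense sets (since $|Y|<\l$), and the axiom supplies a sufficiently generic filter $G$, whose union $c=\bigcup G$ is a total valid coloring of $Y$ with $c(v)\in L(v)$. This witnesses $Pr_{\theta,\k}(\l)$. I expect the routine points — that unions of compatible valid partial colorings are valid, and the cardinal bookkeeping $|\a|^\k<\mu$, $|\a|^\k<\l$ controlling closures — to go through mechanically, with essentially all the work concentrated in the $*^\omega_\mu$ verification described above.
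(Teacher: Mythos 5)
Your proposal follows the paper's architecture exactly: the same forcing notion (conditions are valid partial colorings from the lists with closed domain of size $<\mu$, ordered by extension), density of the sets $D_v=\{p: v\in \dom(p)\}$ for $v\in Y$, a verification of $*^\om_\mu$, and then the internal forcing axiom applied to these $<\l$ dense sets. (Your preliminary filtration of $Y$ into closed pieces $Y_i$ of size $<\l$ does not appear in the paper and does no work afterwards; the forcing makes it unnecessary.) But there is a genuine gap at the step you declare routine, and it occurs at both of the load-bearing points. $Pr_{\theta,\k}(\mu)$, as defined, only says that a closed set of size $<\mu$ can be colored from \emph{arbitrary} lists of size $\ge\theta$; it does not say a coloring can be found \emph{extending} a prescribed condition $p$. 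Your density argument ("admits a valid coloring agreeing with $p$, because colorings \ldots can be patched") and your upper-bound construction in the game ("$Pr_{\theta,\k}(\mu)$ provides a valid coloring of that closed union extending all the individual colorings") both invoke this stronger statement without proof. The paper earns the extension by shrinking the lists before applying $Pr_{\theta,\k}(\mu)$: for $A\in Z\sm \dom(p)$, $A\in\mathcal A$, set $L'(A)=L(A)\sm\{p(v):v\in A\cap\dom(p)\}$, and for a point $v\in Z\sm\dom(p)$ delete the color of the (at most one) $A\in\dom(p)\cap\mathcal A$ with $v\in A$; then the $Pr_{\theta,\k}(\mu)$-coloring from the lists $L'$ is automatically compatible with $p$. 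This is precisely where closedness of $\dom(p)$ is used ($|A\cap\dom(p)|<\k$, so $|L'(A)|=\theta$ because $\theta>\k$), and where the case $\cf(\d)=\cf(\k)$ (the paper's $(*)_5$, and $(*)_6$ for the game) needs separate care: along a union of length $\om$ or $\cf(\k)$ the intersections only satisfy $|A\cap\bigcup_i\dom(p_i)|\le\k$ rather than $<\k$, and points may lie in two (not one) old sets of $\mathcal A$, so one must check the shrunken lists still have size $\theta$. Without this device, the coloring $Pr_{\theta,\k}(\mu)$ hands you on the closure $Z$ need not cohere with the conditions it is supposed to extend, and nothing else in your write-up produces the required extensions or upper bounds.

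Relatedly, your $*^\om_\mu$ verification never specifies what Player I's regressive functions code, yet that coding is the entire content of the strategy. In the paper, $f_k(\d)$ codes the overlap $a^k_\d=\dom(p^k_\d)\cap\bigcup_{\a<\d}\dom(p^k_\a)$, the restriction $p^k_\d\rest a^k_\d$, and a trace function $h^k_\d$ recording for each point of $\dom(p^k_\d)$ the least earlier stage at which it is covered by a set of $\mathcal A$; equality of the $f_k$-values at two indices $i<j$ then forces the corresponding conditions to be compatible \emph{as functions}, which is exactly the hypothesis of the two-sequence amalgamation step $(*)_6$, and the common upper bound is again built by the shrink-the-lists argument. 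Your phrase "the domains cohere by stabilization" asserts the conclusion of this coding without supplying it; since compatibility across the two indices is precisely what must be arranged (an increasing chain at a single index is already handled by $\mu$-completeness), this is a second missing idea rather than bookkeeping.
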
 

\begin{proof} Suppose 

$(*)_1$ $\mathcal A$ is a $(\theta,\kappa)$-family and 
  $Y_*\su G_{\mathcal A}$ is closed,  $|Y_*|<\l$ and 
  $L(v)$ such that $|L(v)|= \theta$ 
 is given for all $v\in Y_*$. 

We need to
  prove the existence of a valid coloring $c$ of $G$ such that
  $c(v)\in L(v)$ for all $v\in Y_*$.

  Let $\mathbb P$ be the following poset:
$q\in \mathbb P$ iff $q$ is a partial valid
  coloring 
for	
the given lists and 
$\dom (q)\su
 Y_*  \su  G_{{\mathcal A}} $ 
 is closed of
  cardinality $<\mu$. A condition $q$ 
is stronger than a condition
  $p$, $q\ge p$, iff $p\su q$.

$(*)_2$ $\mathbb P$ is a forcing notion. 

$(*)_3$ (Density) if $p\in {\mathbb   P} $ 
and $Z\su Y_*$ satisfies $|Z|<\mu$
{\em  then} 
there is $q\ge p$ such that $Z\su \dom (q)$. 

\medskip

{Proof of $(*)_3$:} By  possibly 
increasing $Z$, we may 
assume that $Z$ is closed in $Y_*$ and
that $\dom (p)\su Z$. As $\dom (p)$ is 
closed, for all $A\in \mathcal A
\cap Z\sm \dom (p)$ it holds that
 $|A\cap \dom (p)|<\k$. For 
$A \in {\mathcal A} \cap 
 Z\sm
\dom (p) $ 	
 let
$L'(A)=L(A)\sm \{ p(v):v\in A\cap \dom (p)\}$.
 As $|A\cap \dom
p|<\k<\theta$ it holds that $|L'(A)|=\theta$. 

For $v\in (Z\sm \dom (p))\cap pt(\mathcal A)$ 
there is at most one $A\in \dom (p)$
such that $v\in A$, because $\dom (p)$ is closed. Let $L'(v)$ be 
gotten from $L(v)$ by subtracting
$\{p(A)\}$ 
from $L(v)$ for that 
unique $A$, when $A$ exists. For all
$x\in \dom (p)$ let $L'(x)=L(x)$.

By $Pr_{\theta,\kappa}(\mu)$ there
 is condition $p''$ with $\dom (p'')=Z$ such that
$p''(v)\in L'(v)$
for every $ v \in Z $. 
 Let $p'=p''\rest 
(Z\sm \dom ( p)) 
$ and let $q=p\cup
p'$.  Now we claim that $q\in 
{ {\mathbb   P} } $. 
As $ x \in Z \imply 
q(x)\in L'(x)\su L(x)$ for all
$x\in \dom (q)$, all that needs to
 be checked is the validity of the
coloring $q$. Suppose that $v\in A$
 and $v,A\in \dom (q)$. First assume
that $v\in \dom (p)$ and $A\in \dom (p')$.
 In this case $p(v)\not=p'(A)$
because $p(v)\notin L'(A)$ by the  
definition of $L'(A)$. Another
case to check is $v\in \dom (p')$ 
and $A\in \dom (p)$, 
 which followed from
the choice of $L'(v)$. The two
 remaining cases are clear,
holds as $ p, p \in {\mathbb   P} $. 

\smallskip
\item{$(*)_4$} If $\lng p_i:i<\d\rng$ 
is an increasing sequence of conditions
in $\mathbb P$ and 
$ \delta < \mu $  and 
$\cf(\d)\not=\cf(\k)$ 
then the union is a condition,
and is an upper bound of the sequence. 

Proof of $ (*)_4 $:
Let $Y_\d=\bigcup\{\dom p_i): i<\d\}$. 
Now $|Y_\d|<\mu$  as $\delta  
<\mu$ by the
assumptions, and $i<\d \imply  
\dom(p_i)\in[Y_*]^{<\mu}\su [G_{\mathcal
  A}]^{<\mu}$ by the  
definition of $ {\mathbb   P} $, 
  recalling that 
$\mu$ is regular (
by $ \mu= 
\mu ^{ <\mu } $  from 
 of the
claim's assumptions). Since  
$\cf (\d)\not=\k$, by \ref{3A} 
 it holds that  $p=\bigcup_i p_i$  is
 a condition. 

\smallskip
\item{$(*)_5$} If $\d<\mu$, 
$\ov p=\lng p_i:i<\d\rng$ is increasing in 
$\mathbb P$ and $\cf(\d)=\cf(\k)$ then  
$\ov p$ has an upper bound in $\mathbb P$. 

Proof of $ (*)_5 $:
 Let $Z\su
Y_* \su  
 G_{\mathcal A}$ be closed such that
$|Z|<\mu$ and $Y=\bigcup_{i<\d} \dom(p_i)\su Z$. 
  By restricting to a subsequence we may
 assume that $\d=
\cf(\k)$ 
 and so $A\in \mathcal A\sm Y \imply
\bigwedge_{i<\k} (|A\cap \dom (p_i)|<\k
 \imply |A\cap Y|\le \k$. Now repeat
the proof of $(*)_3$  with $p=\bigcup_i p_i$ with the
following  changes: 

\begin{enumerate}
\item[$(a)$]  if $A\in Z\sm Y$, $A\in \mathcal A$, then $|A\cap Y|\le
\k$ hence $L'(A)=L(A)\sm \{p(v):v\in A\cap Y\}$ has cardinality
$\theta$ as $L(A)$ has cardinality $\theta > \k\ge  |A\cap Y|\ge |\{p(v):
v\in A\cap Y\}|$.

\item[(b)] if $v\in Z\sm Y$, $v\in
 pt (G_{\mathcal A})$, then 
  \[i<\k\imply |\{ A\in \dom (p_i)\cap ]
\mathcal A: v\in A\}|\le
  1,\] 
hence 
\[|\{ A\in \mathcal A\cap Y: v\in A\}|\le 1\]
and  $L'(v)=L(v)\sm \{p(A): A\in Y_\d 
\wedge v\in A\}$ has cardinality
$\theta$. 
\end{enumerate}

Now we can 
finish	
as in $(*)_3$.

\smallskip
\item{$(*)_6$} $\{ p^\ell_\zeta: 
 \ell=1,2 \;\text{and }  \z<\d\}$ has a common upper
bound {\em when}: 
\begin{enumerate} 
\item[$(a)$] $\d<\k^+\le \mu$ 
(we will use $\d=\omega<\k^+$ when 
simpler 
as this us the one we shall use).	2016-02-09 simpler). 
\begin{enumerate} 
\item[$(b)$] $p^\ell_\z\in \PP$ 
\item[$(c)$] $\z<\xi<\d\imply p^\ell_\z\le_{\PP} 
p^\ell_\xi$.
\item[$(d)$] $p^1_\z,p^2_\z$ are compatible 
functions for $\z<\d$. 
\end{enumerate} 
\end{enumerate} 

Proof of $ (*)_6 $: 
 Let $p=\bigcup_{\ell,\z}p^\ell_\z$, 
 so $p$  is a function, but not
 necessarily a 
condition 
in $\PP$. 
 Let $Y=\dom (p)$ and $Z\supseteq Y$ be
closed  and of cardinality $<\mu$
such that $ Z 
\subseteq  Y_* 
\subseteq G_ {\mathcal A} $.

(A) If $A\in Z\sm Y$, $A\in \mathcal A$,
and $ {\ell} \in \{ 1,2 \} $          
 then 
$\ell \in \{1,2\}\wedge \z<\d \imply 
|A\cap \dom (p_ \zeta 
^\ell)|<\k$ so 
$\lng |A\cap \dom (p_\zeta 
^\ell)|:
\zeta 
<\d\rng$ is a non-decreasing sequence of 
sets each of cardinality 
$<\k$  hence $\le \k$. 
So 
 $|A\cup
 \bigcup_{\zeta  
} \dom (p_\zeta  
^\ell)|\le \k$, 
hence $|A\cap Y|\le \k$.

(B) If $v\in Z\sm Y$, $v\in pt (G_{\mathcal A})$, 
and $ {\ell} \in \{ 1,2 \} $ 
then $|\{A\in \bigcup_{\zeta 
} \dom(p_\zeta  
^\ell), v\in A\}|\le 1$ hence 
$|\{A\in Z\sm A: A\in \mathcal A,
 v\in A\}|\le 2$.  So all is fine.

We
continue as in the proof of $(*)_5$. 

\smallskip
\item{$(*)_7$} $\mathbb P$ is $\mu$-complete  (by $(*)_4 + (*)_5$). 

\smallskip
\item{$(*)_8$} The property $*^\omega_\mu$ holds for $\mathbb P$. 

The game which
defines $*^\omega_\mu$ lasts
 $\omega$ steps and at each step
$k<\omega$ 
for $ {\ell} = 1,2 $ 	
a sequence of conditions 
$ \lng	
p^ k _{1, \alpha  } : \alpha 
<\mu^+\rng$, a club
$ E_k \su \mu^+$ and a regressive 
function $f_ k $  
with domain 
$\mu^+ $ 
played by the completeness
player I (see Def \ref{b9} or  
 \cite{sh:546} p. 5. See also \cite{sh:1036}
 for more on this 
and related 
forcing axioms).

This is how player I chooses $E_k$ and $f_k$: 
 $E_k$ is
sufficiently closed; 
$f_k$ has domain $ \mu ^+ $ and 
 is regressive such that:

$\oplus$ If $\a_1,\a_2\in 
\dom (f_k)\cap 
S^{\mu^+}_\mu \cap E_k$, 
$f_k(\a_1)=f_\z(\a_2)$ then 
$p_{1,  \a_1}^k$, $p_{1, a_2}^k$  
 are compatible functions. 

This clearly suffices (as the   
$\lng (p_{1,  \a_1}^k,p_{1, \a_2}^k):   
 k<\d\rng$ 
are like $\lng (p^1_\z,p^2_\z)$ in $(*)_6$).

Clearly such a function
$ f_k $   
 exists
but we elaborate. 

$(*)_{8.1}$ $f_\zeta(\delta)$ codes:
\begin{enumerate}
\item [$(a)$]
$a^k_\d=\dom(p^k_{1, \delta})  
\cap (\bigcup_{\a<\delta} \dom
p^k_{1, \alpha } ) $.   
\item [$(b)$]
$p^k_{1, \delta } 
\rest a^k  
_\delta$.

\item[(c)]
$ \{ 
p^k_{1, \delta} (v ): v \in 
\dom (p^k_{1, \delta } ) \cap \cap  (
{\mathcal A} )
\text{ and }
p^k_{1, \delta } (v) \in 
\bigcup _{ \alpha < \delta }  \ran ( p^k_{1,
\alpha } ) 
\} $ 

\item[(d)]
$ \{ 
p^k_{1, \delta} (A ): A \in 
\dom (p^k_{1, \delta } ) \cap {\mathcal A} 
\text{ and }
p^k_{1, \delta } (A) \in 
\bigcup _{ \alpha < \delta } \ran ( p^k_{1,
\alpha } ) 
\} $ 

\end{enumerate}

[ What is the point of clauses (c), (d)?
Consider $ p^k_{1, \alpha _1 } 
, p^k_{ 1, \alpha _2 } $
with $ \alpha _1 < \alpha_2 $;
maybe there are 
$ {\ell} \in \{ 1,2 \} , v \in \dom
 ( p^k_{ 1, \alpha _
{\ell} } ) 
\cap   pt 
 ( G_ {\mathcal A} ) ,
A \in \dom ( p^k_{ 1 , \alpha _{3- {\ell}}} ) 
\cap 
{\mathcal A} $
such that
$ v \in A $ and
$ p^k_{ 1, \alpha _ {\ell} } 
( v )  =  
 p^k_{1, \alpha _{3- {\ell}}}( A )
$.
Those are avoided by those clauses. 2016-02-10 s

So now player I wins as whenever 
 $ \alpha < \beta $ 		
belong to 
$S^{\mu^+}_ \mu 
\cap
\bigcap _{k} E_k$ and $\bigwedge_{k} 
f_k(\alpha )   
=f_k(\beta ) $,  		
 the set of
conditions $\{p^k_{1,\alpha   
}:k<\om\}\cup
\{p^k_{1, \beta  
}:k<\om\}$ has an upper bound in 
$ {\mathbb   P} $
by $ (*)_6 $.  	

This proves $(*)_ 8 $. 

$(*)_{9}$ 
If $ x \in Y_* $ then $ D_x $
is an open dense subset of $ {\mathbb   P} $ 
where

$ D_x= \{ p \in {\mathbb   P} : x \in 
\dom ( p ) \} $ 

[ Why $ ( * ) _ 9 $ holds? by $ ( * ) _3 $.]

By the axiom for posets with  
$*^ \omega _ \mu $, 
there is a 
generic filter  for $\mathbb P$ which meets 
all dense sets $D_x$ for $x\in Y$, 
where $p\in D_x$ if $x\in\dom(p)$. 
The  union of the generic  is  a 
valid coloring from the lists on
$ Y_* $.	
\end{proof}

\begin{corollary} \label{twoten}
Suppose $n\ge 1$ and
\begin{enumerate}
\item [$(a)$]
$\mu_0<\mu_1\dots <\mu_n$.
\item [$(b)$]
 For all $l\le n-1$ it holds that $(\forall
  \a<\mu_\ell)(|\a|^\k<\mu_\ell)$. 
\item [$(c)$]
$ \mu_ {\ell} 
^{< \mu _ {\ell} }=\mu_{\ell}$ and 
$2^{\mu_i}=\mu_{i+1}$ for $i<n$. 
\item [$(d)$] 
For every ${\ell} 
<n$, the forcing axiom for posets with
  $*^\omega
_{\mu_{\ell} } $ 
and $<\mu_{{\ell} 
+1}$ dense sets holds. 
\item [$(e)$]
$\mu_0\le\theta^+$
and $ \kappa < \theta $.		 
\end{enumerate}
Then $Pr_{\theta,\kappa}(\mu_n)$. 
\end{corollary}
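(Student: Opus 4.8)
The plan is to prove Corollary \ref{twoten} by induction on $n \ge 1$, using the Step-up Lemma \ref{stepup} to climb from $\mu_0$ up to $\mu_n$ one level at a time. The base of the induction is the hypothesis $\mu_0 \le \theta^+$ together with clause (5): I would first observe that $Pr_{\theta,\kappa}(\mu_0)$ holds trivially (or nearly so), since for families indexed below $\mu_0 \le \theta^+$ the list-chromatic bound of $\theta$ follows directly from the definition of $Pr_{\theta,\kappa}$ — a closed subgraph $Y$ of size $<\mu_0 \le \theta^+$ is small enough that a greedy coloring succeeds. This anchors the induction.

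\emph{Inductive step.} Assume $Pr_{\theta,\kappa}(\mu_i)$ has been established; I want $Pr_{\theta,\kappa}(\mu_{i+1})$. The plan is to apply Lemma \ref{stepup} with $\mu := \mu_i$ and $\l := \mu_{i+1}$, so I must verify each of its four hypotheses. For hypothesis (1) of the lemma, the internal forcing axiom for $*^\omega_{\mu_i}$-posets for $<\mu_{i+1}$ dense sets, I would cite clause (4) of the corollary directly. For hypothesis (4), $Pr_{\theta,\kappa}(\mu_i)$, I would cite the inductive hypothesis. Hypotheses (2) and (3) of the lemma — namely $(\forall \a<\mu_i)(|\a|^\k<\mu_i)$ and $(\forall \a<\mu_{i+1})(|\a|^\k<\mu_{i+1})$ — I would derive from clause (2) of the corollary (which gives the first for all $\ell \le n-1$) together with clause (3), $2^{\mu_i}=\mu_{i+1}$: since $\mu_{i+1}=2^{\mu_i}$ and $\k<\theta<\mu_0\le\mu_i$, for $\a<\mu_{i+1}=2^{\mu_i}$ we have $|\a|^\k \le (2^{\mu_i})^{\k}=2^{\mu_i}=\mu_{i+1}$, but I need the strict inequality, which requires care about cofinalities — I would check that $\mu_{i+1}=2^{\mu_i}$ has cofinality $>\k$ so that the supremum is not attained. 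One also needs $\mu_i=\mu_i^{<\mu_i}$, the regularity-type assumption of the Step-up Lemma; I would extract this from clause (2) applied at level $i$ together with standard cardinal arithmetic under $2^{\mu_{i-1}}=\mu_i$.

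\emph{The main obstacle} I expect is precisely the bookkeeping in hypotheses (2) and (3) of the Step-up Lemma and the hidden requirement $\mu_i = \mu_i^{<\mu_i}$. Clause (2) of the corollary is stated only for $\ell \le n-1$, so at the top level $i+1=n$ I must verify $(\forall \a<\mu_n)(|\a|^\k<\mu_n)$ from $2^{\mu_{n-1}}=\mu_n$ rather than from a direct hypothesis; this is the one place where I cannot simply quote a clause and must run the cofinality argument. I would also need to confirm that $\omega<\mu_i$ is a limit ordinal as the Step-up Lemma demands — automatic since $\mu_i$ is an uncountable cardinal — and that the chain of exponential identities $2^{\mu_i}=\mu_{i+1}$ propagates the $\mu=\mu^{<\mu}$ condition upward at each stage.

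\emph{Conclusion.} After $n$ applications of the Step-up Lemma, starting from $Pr_{\theta,\kappa}(\mu_0)$ and passing successively through $Pr_{\theta,\kappa}(\mu_1),\dots,Pr_{\theta,\kappa}(\mu_{n-1})$, I arrive at $Pr_{\theta,\kappa}(\mu_n)$, which is the desired conclusion. The entire argument is a clean induction whose only nonroutine content is matching the corollary's hypotheses to the lemma's and handling the top-level cardinal-arithmetic check.
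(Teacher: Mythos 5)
Your proposal is, in structure, exactly the paper's proof: the paper proves Corollary \ref{twoten} by the same induction on $n$, with the base case $Pr_{\theta,\kappa}(\mu_0)$ trivial because any graph of cardinality $<\mu_0\le\theta^+$ has list-chromatic number at most $|G|\le\theta$, and with the inductive step being a direct application of the Step-up Lemma \ref{stepup} with $\mu=\mu_i$, $\lambda=\mu_{i+1}$, quoting clause (4) for the forcing axiom and the inductive hypothesis for $Pr_{\theta,\kappa}(\mu_i)$.

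The one substantive divergence is the cardinal-arithmetic patching you propose, and there your sketched arguments would fail. You cannot derive $(\forall\alpha<\mu_{i+1})(|\alpha|^\kappa<\mu_{i+1})$ from $2^{\mu_i}=\mu_{i+1}$ by the cofinality argument you indicate: although $\cf(2^{\mu_i})>\mu_i>\kappa$ always holds, it is consistent that, say, $\kappa=\aleph_0$ and $2^{\mu_i}=\chi^+$ for some singular $\chi$ of cofinality $\omega$ (the pattern $2^{\aleph_1}=\aleph_{\omega+1}$, shifted above $\theta$); then $\chi<\mu_{i+1}$ while $\chi^{\aleph_0}=\chi^{\cf(\chi)}>\chi$, so $\chi^{\aleph_0}\ge\chi^+=\mu_{i+1}$, and the strict inequality fails. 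Similarly, the hypothesis $\mu=\mu^{<\mu}$ of Lemma \ref{stepup} does not follow from $2^{\mu_{i-1}}=\mu_i$ by ``standard cardinal arithmetic'' --- indeed $2^{\mu_{i-1}}$ need not even be regular. These conditions have to be read as hypotheses rather than consequences: the bound $\ell\le n-1$ in clause (2) of the corollary is evidently a misprint for $\ell\le n$, and in the intended application (Claim \ref{AtoB}) the preparatory forcing arranges $\mu_\ell=(\beth_{2\ell}(\theta))^+$ regular with $(\forall\alpha<\mu_\ell)(|\alpha|^\kappa<\mu_\ell)$ at every level, including the top one. The paper's one-line proof (``the induction step follows from the main lemma'') silently uses the hypotheses in this form. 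Read this way, your induction is exactly right, and your instinct that the top level is the sore point is correct --- but your proposed repair is not a ZFC argument, and no ZFC repair exists; the statement must be strengthened at the hypothesis.
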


\begin{proof} By induction on $n$. 
Since the list-chromatic number of
  any graph $G$ of cardinality $<\mu_0$ 
is $\le |G|\le\theta$, the
  condition $Pr_{ \theta, \kappa }     
(\mu_0)$ 
holds trivially.
 The induction step follows
  from the main lemma  
\ref{stepup}.  	
\end{proof}

Next we show how to force the  
conditions of the previous lemma.

\begin{claim} \label{AtoB} Assume that: 
\begin{itemize}
\item[(A)]$\theta=\theta^{<\k}>\k$ and $\beth_{2n+1}(\theta)<\mu\le \chi<\l$. 
\item[(B)] $(st)^1_{\k,\mu,\chi,\l}$.
\end{itemize}

Then: 
For some $\mathbb P$ and $ \bar{ \mu } $ 
\begin{itemize}
\item[(a)] $\mathbb P$ is a 
$\theta^+$-complete forcing notion 
that satisfies $(\beth_{2n+1}(\theta))^+$-c.c. 
\item[(b)] 
$ \bar{ \mu } = \langle \mu _{\ell} 
:{\ell} \le n \rangle $ 
in ${\bf V}^{\mathbb P}$
such that 		
 $\mu_\ell= \mu ^{\mu _ {\ell} } = 
(\beth_\ell(\theta))^+
<\beth_{\ell+1}(\theta)$ for all 
$\ell\le n$, $|\a|^\k<\,\mu _ {\ell} $ 
for all $\a<\mu _  {\ell} $ 
and 
$\beth_n(\theta)<\mu\le \chi<\beth_{n+1}(\theta)$.
\item[(c)]  $(\stt)^1_{\k,\mu,\chi,\l}$.  
\item[(d)]  The forcing axiom 
$*^\om_{\mu_\ell}$ with $<\mu_{\ell+1}$
 holds for all $\ell\le n$. 
\end{itemize}

\end{claim}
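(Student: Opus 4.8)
The plan is to build $\mathbb P$ as a two-phase, bottom-up iteration that first spreads the $\beth$-function over $\theta$ and then installs the internal forcing axioms $*^\om_{\mu_\ell}$, keeping throughout enough completeness to fix $\theta$ and enough chain condition to leave the witness $\mathcal A$ untouched. Working toward $\mathbf V^{\mathbb P}$, I would set $\mu_\ell=(\beth_\ell(\theta))^+$ and decide in advance the target values of $\beth_\ell(\theta)$, so that after all steps $\beth_n(\theta)<\mu\le\chi<\beth_{n+1}(\theta)$ while $2^{\mu_\ell}=\mu_{\ell+1}$ and $(\forall\a<\mu_\ell)(|\a|^\k<\mu_\ell)$ hold. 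The whole point of framing the hypothesis with $\beth_{2n+1}(\theta)<\mu$ is that realizing, at each level, both a power blow-up and a forcing axiom consumes (in the worst case) two consecutive blocks of the ground-model $\beth$-tower per level; starting $\mathcal A$ above $\beth_{2n+1}(\theta)$ guarantees the construction fits below $\mu$.

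For the first phase I would use an Easton-style iteration $\mathbb C=\mathbb C_1\ast\cdots\ast\mathbb C_n$ of Cohen-type posets, where $\mathbb C_\ell$ adds subsets of $\beth_\ell(\theta)$ and blows up $2^{\beth_\ell(\theta)}$ to the chosen value, the top step $\mathbb C_n$ pushing $\beth_{n+1}(\theta)=2^{\beth_n(\theta)}$ past $\chi$. Each $\mathbb C_\ell$ is $<\beth_\ell(\theta)$-closed, hence $\theta^+$-complete since $\ell\ge 1$, and $\mu_\ell$-c.c.; crucially $\mathbb C$ never adds subsets of $\theta$, so $\theta=\theta^{<\k}$ is preserved and the property that distinct members of $\mathcal A$ meet in a set of size $<\k\le\theta$ is untouched. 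Because every factor is $\mu_\ell$-c.c.\ with $\mu_\ell\le\mu$, the cardinals $\mu\le\chi<\l$ (all above $\beth_{2n+1}(\theta)$) are preserved, so $\mathcal A$ still witnesses $(st)^1_{\k,\mu,\chi,\l}$; here the large \emph{size} of $\mathbb C_n$ (which exceeds $\chi$) is harmless, since only its chain condition bears on preservation.

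For the second phase, over $\mathbf V^{\mathbb C}$ I would run the forcing-axiom iteration $\mathbb F=\mathbb F_0\ast\cdots\ast\mathbb F_{n-1}$, where $\mathbb F_\ell$ is the iteration from \cite{sh:546} of posets satisfying $*^\om_{\mu_\ell}$, forcing the internal forcing axiom $*^\om_{\mu_\ell}$ for $<\mu_{\ell+1}$ dense sets; each $\mathbb F_\ell$ is $\mu_\ell$-complete and of length and chain condition $\mu_{\ell+1}\le\mu$ (this is exactly the range of axioms needed in Corollary~\ref{twoten}, so the top blow-up $\beth_{n+1}(\theta)>\chi$ is carried entirely by the Cohen phase and no $\mathbb F_\ell$ collapses $\chi$). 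Going bottom-up is what makes the bookkeeping work: an iterand $\mathbb F_{\ell'}$ with $\ell'>\ell$ is $\mu_{\ell'}\ge\mu_{\ell+1}$-complete, hence adds no new family of $<\mu_{\ell+1}$ dense sets and, by the preservation theory of \cite{sh:546}, keeps the completeness player's winning strategy for the posets relevant to $*^\om_{\mu_\ell}$, so the axiom secured at stage $\ell$ survives to the end. The composite $\mathbb P=\mathbb C\ast\mathbb F$ is then $\theta^+$-complete (the least completeness of a factor is $\mu_0=\theta^+$) and $(\beth_{2n+1}(\theta))^+$-c.c.\ (the largest chain condition is $\mu_n\le(\beth_{2n+1}(\theta))^+$), which is clause (a); clauses (b), (c), (d) read off the construction, and in $\mathbf V^{\mathbb P}$ Corollary~\ref{twoten} then yields $Pr_{\theta,\k}(\mu_n)$.

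The main obstacle is the simultaneous control demanded in the second phase. Each $\mathbb F_\ell$ must force $2^{\mu_\ell}=\mu_{\ell+1}$ with $\mu_{\ell+1}\gg\mu_\ell^+$, yet it must \emph{not} collapse the cardinals in $(\mu_\ell,\mu_{\ell+1})$ that have to remain distinct---in particular $\beth_{\ell+1}(\theta)$, so that $\mu_\ell<\beth_{\ell+1}(\theta)$ and the local closure $(\forall\a<\mu_\ell)(|\a|^\k<\mu_\ell)$ persist to the end---while at the same time preserving every lower axiom $*^\om_{\mu_{\ell'}}$, $\ell'<\ell$, and the Phase-A arithmetic at all higher levels. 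A $\mu_\ell$-complete, $\mu_{\ell+1}$-c.c.\ forcing can in general collapse cardinals in that interval, so this is precisely the point where the fine closure and chain-condition profile of the $*^\om_{\mu_\ell}$-preserving posets of \cite{sh:546}---rather than mere completeness and c.c.---must be invoked. I expect this interplay, together with the reverse-Easton bookkeeping that keeps the target $\beth$-values stable under the later, larger iterands, to be the one genuinely delicate step; the cardinal-arithmetic verifications in clause (b) are otherwise routine.
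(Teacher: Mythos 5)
There is a genuine gap, and it sits exactly at the step you flag at the end as ``delicate'' but leave unresolved. Your Phase A consists only of cofinality-preserving Cohen-type blow-ups, whereas the paper's first phase is a \emph{product of Levy collapses}: it sets $\mu_\ell=(\beth_{2\ell}(\theta))^+$ (the even levels of the ground-model tower, so $2^{<\mu_\ell}\le\beth_{2\ell+1}(\theta)$) and forces with $\mathbb Q_*=\prod_{\ell\le n}\operatorname{Levy}(\mu_\ell,2^{<\mu_\ell})$. The collapses are not optional bookkeeping: the forcing-axiom iteration of \cite{sh:546} for $*^\om_{\mu_\ell}$-posets with $<\mu_{\ell+1}$ dense sets is $\mu_\ell^+$-c.c.\ only when $2^{<\mu_\ell}=\mu_\ell$, and this $\mu_\ell^+$-c.c.\ is precisely what guarantees that the axiom step collapses nothing in $(\mu_\ell,\mu_{\ell+1})$ --- the very danger you identify. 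But the instance of local CH, $2^{<\mu_\ell}=\mu_\ell$, cannot be produced by adding sets: Cohen forcing only pushes power sets up, and the hypothesis of the claim is an arbitrary ground model in which GCH may fail at every cardinal below $\mu$, so no choice of Cohen targets lands the tower on cardinals where $2^\nu=\nu^+$ already holds. Hence a collapse-free Phase A cannot in general arrange the arithmetic $2^{\mu_\ell}=\mu_{\ell+1}$, $\mu_\ell=(\beth_\ell(\theta))^+$ needed for Corollary~\ref{twoten}, and your appeal to ``the fine closure and chain-condition profile'' of the $*^\om_{\mu_\ell}$-posets to avoid collapsing is circular: that profile (the $\mu_\ell^+$-c.c.) is available only after the collapse has been performed. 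Relatedly, your assertion that each Cohen factor is $\mu_\ell$-c.c.\ is false in general --- $\operatorname{Add}(\sigma,\tau)$ is only $(2^{<\sigma})^+$-c.c., which in a model with bad arithmetic can exceed $(\beth_{2n+1}(\theta))^+$, breaking clause (a) and with it the preservation of $\mu\le\chi<\l$ and of $(st)^1_{\k,\mu,\chi,\l}$.

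Your intuition that $\beth_{2n+1}(\theta)$ appears because ``each level consumes two consecutive blocks of the ground-model tower'' is correct, but the two blocks are (collapse, blow-up), not (blow-up, blow-up): in the paper, $\operatorname{Levy}(\mu_\ell,2^{<\mu_\ell})$ eats the odd step $\beth_{2\ell+1}(\theta)$ down to $\mu_\ell=(\beth_{2\ell}(\theta))^+$, and then the axiom iterand $\mathbb Q^2_\ell$ itself performs the blow-up $2^{\mu_\ell}=\mu_{\ell+1}$ --- there is no separate Cohen phase at all. The chain-condition accounting then comes for free: $|\mathbb Q_*|\le\beth_{2n+1}(\theta)$ makes $\mathbb Q_*$ trivially $(\beth_{2n+1}(\theta))^+$-c.c., each $\mathbb Q^2_\ell$ is $\mu_\ell^+$-c.c.\ with $\mu_\ell^+\le(\beth_{2n+1}(\theta))^+$, everything is $\theta^+$-complete (so the $\k$-family property is absolute), and cardinals above $\beth_{2n+1}(\theta)$, in particular $\mu,\chi,\l$, survive, giving clause (c). Your bottom-up ordering of the axiom iterations, and the observation that later, more complete iterands do not disturb axioms already secured, does agree with the paper's $\mathbb P_{\ell+1}=\mathbb P_\ell*\mathbb Q^2_\ell$; that part of the architecture is sound, but without the collapse phase the construction cannot get started from an arbitrary $\mathbf V$.
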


\begin{proof}
Now, 

\begin{itemize}
\item[$(*)_1$] $(\beth_{2n+1}(\theta))^+
\le \miq $ and 	
 $(\beth_{2n+1}(\theta))^+ <  \xig $ 	
\end{itemize}

Why $ (*)_1$? 
The first inequality hold by (A). 
For the second,  
letting
$ \chi_1 = \beth _{2n+1}( \theta )^+ $
we have
$ \chi_1 ^{ \beth _{2n}( \theta )}= \chi_1$ 
hence $ \chi_1 ^ \k = \chi_1 $
whereas
$ \xig		
^ \k \ge  \lambda > \mu $ because
we are assuming 
$ ({\stt})^1_{\k, \mu, \chi,\lambda } $ 

Now let 

\begin{itemize}
\item[$(*)_2$] \begin{itemize}
\item[(a)] $\mu_\ell=(\beth_{2\ell}(\theta))^+$ for$\dagger$ 
$\ell\le n$ so $2^{<\mu_\ell}\le \beth_{2\ell+1}(\theta)$.
\item[(b)] Choose 
$ \mu_{n+1} $ such that   
$\mu_{n+1}=\cf(\mu_{n+1})=(\mu_{n+1})^{(\mu_n^\k)}>\l$ s
and 	
$\a<\mu_{n+1} \imply |\a|^\k <\mu_{n+1}$	
\end{itemize}
\end{itemize}

Remark: less suffices; 
 $\mu_{n+1}=(\l^\k)^+$ or just $\mu_{n+1}=\cf(\mu_{n+1})>\l$
 satisfies $(\forall \a<\mu_{n+1})
(|\a|^\k<\mu_{n+1})$, but will complicate the
  notation below, e.g. $(*)_4(b)$ for 
$\ell=n$ is different. 
  
  Now 
  \begin{itemize}
  \item[$(*)_3$] \begin{itemize}
  \item[(a)] $\mu_0=\theta^+$ hence 
$\mu_0=\cf(\mu_0)$ and 
  $(\forall \a)(\a<\mu_0\to |\a|^\k\le 
\theta^\k  = \theta 
<\mu_0)$. 
  \item[(b)] $\mu_0<\mu_1<\dots <\mu_n<\mu_{n+1}$ 
are regular. 
  \item[(c)] $(\forall \a<\mu_\ell)
(|\a|^\k<\mu_\ell)$ for all $\ell\le n+1$.
    \item[(d)] $(\mu_{\ell+1})
^{2^{<\mu_\ell}}=\mu_{\ell+1}$.
  \item[(e)]  $\mu_n<\mu\le \chi<\l<\mu_{n+1}$.
  \end{itemize}
  \end{itemize}

Let 

\begin{itemize}
\item[$(*)_4$] Let 
\begin{itemize}  
\item[(a)] $\mathbb Q^*_\ell=
{\rm Levy} 
(\mu_\ell, 2^{<\mu_\ell})$ for $\ell\le n$. 

\item[(b)]
 $\mathbb Q_*=\prod_{\ell \le n} 
\mathbb Q^*_{\ell} $. 
\item[(c)] $ Q^*_{ \le k }=\prod_{\ell \le k} 
\mathbb Q^*_{\ell} $ 
\end{itemize} 
\end{itemize} 

Easily, 

\begin{itemize}
\item[$(*)_5$] 
\begin{itemize}  
\item[(a)]  
$\mathbb Q^*_\ell$ is $\mu_\ell$-complete 
and of cardinality $2^{<\mu_\ell}$.  
so satisfies the  $2^{<\mu_\ell}$-cc. 
\item[(b)] 
 Let   
${\bf V}_\ell:= {\bf V}^
{\prod_{k<\ell}\mathbb Q^*_\k}$ 
\end{itemize} 
\end{itemize} 


\begin{itemize}
\item[$(*)_6$]  
in 
${\bf V}_{n+1}:={\bf V}^
{\prod_{\ell\le n} \mathbb Q_\ell}$,
we 
 define  $\lng (\mathbb P_k,
{\underset \sim{\mathbb Q}}^2_\ell): 
 k\le n+1, \ell\le n\rng$ 
such that : 
\begin{itemize}
\item[(a)] 
$\mathbb P_0$ is the trivial forcing. 
\item[(b)] 
$\mathbb P_{\ell+1}$ is a forcing 
notion of cardinality 
$\mu_{{\ell} +1} $. 
\item[(c)] 
$\mathbb P_{\ell +1}$ satisfies the $\mu_\ell^+$-c.c. 
\item[(d)] 
$\mathbb P_{\ell+1}=\mathbb 
P_\ell *  
\underset \sim{\mathbb 
{Q}}_\ell^2$. 
\item[(e)] 
${\underset \sim {\mathbb Q}}^2_{\ell}$   
is a ${\mathbb P}_{<\ell}$-name of a forcing
 notion of cardinality $\mu_{\ell +1}$ 
 that satisfies $\mu_\ell^+$-c.c. that forces 
 $2^{\mu_\ell}=\mu_{\ell+1}$ and the 
axiom for forcing notions that satisfy
  $*^\om_{\mu_\ell}$ for 
$<\min\{\mu_{\ell+1}, (
\miq  
^{\kappa})^+\}$ 
dense sets. 
 \item[(f)]   $ {\mathbb   P} _{k+1}$ 
is a $ {\mathbb Q}_* $-name 
and actually a 
$ {\mathbb Q}  ^*_{ \le k } $-name 
\end{itemize} 
\end{itemize} 

There is no problem to carry the  
induction (note that 
$(\mu_{\ell+1})^{<\mu}=\mu_{\ell+1}$ 
in $\mathbf V^{\mathbb P_n+1}_{n+1}$.) 
We return to $\bf V$,  
so we 	
 have a ${\mathbb Q}_{ \le  k}$-names 
so $ \underset \sim{{\mathbb   P}} _{\ell} $ for  
$ {\ell} = 0,\dots, k+1 $ 
for the forcing notion 
above.  
Let, in $\bf V$, 
$\mathbb P^{k+1} 
= \mathbb {\mathbb Q}  ^*_{\le k } *  
{\underset \sim{\mathbb   P}}_{k+1}$. 
 Why ${\mathbb P} = {\mathbb  P} ^n 
$ is as required?

Clearly,   
all forcing notions  
${\mathbb Q}^*_\ell, \mathbb Q_*, 
{\mathbb Q}  ^*_{ \le k}, {\mathbb   P} ^k$ , 
are $\theta^+$-complete, hence
in particular 
 so is 
$ {\mathbb P}$.
 Therefore,
in $ {\bf V }  ^ {\mathbb   P} $ still 
 $(\forall \a<\mu_\ell)
(|\a|^\k<\mu_{\ell+1})$ for all $\ell<n+1$ 
because we prove below 
 that $\mu_\ell$ does not collapse.
 
 Clearly, ${\mathbb P} ^{k+1} 
$ has cardinality $\mu_{k+1}$ and 
satisfies the $ ( 2^{< \mu _{k}})^+ $-cc 
and in $ {\bf V } $ the forcing notion 
$ { {\mathbb Q}}  _{\ell} $ 
are $ \mu _ {\ell} $-complete 
and in $ {\bf V }  ^{ {\mathbb Q}  ^*_{\le k } } $ 
the forcing notion $ {\mathbb Q}  ^2_{\ell} $ 
is forced to be 
$ \mu _ {\ell} $-complete.  
Hence in $ {\bf V } $ 
for $ {\ell} \le k $ we have 
$\force_{{\mathbb P}^k}"  
\mu_\ell =\mu_\ell^{<\mu_\ell}$
 is not collapsed", and ${\mathbb P}^k$ 
satisfies the $((2^{<\mu_k})^+)$-c.c. 
as ${\mathbb  Q}^*_{ \le k } $ 
does, and ${\mathbb 
P}_{k+1}$ satisfies 
$(\mu ^ \kappa )+ $-c.c.. 
 
 Lastly, the relevant forcing axiom holds: if $\ell < n$, the one 
 for $(*)^\epsilon_{\mu_\ell}$ and $<\mu_{\ell+1}$-dense sets. So replacing
 $\mu_{n+1}$ by $(\mu^\k)^+$ and applying 2.6  we are done. 	
 \end{proof}
 
A similar argument works to replace $n$ with $\om$: 
 
\begin{theorem}\label{bethomega}
The condition $(A)_{\ell(*)}$ implies the condition $(B)_{\ell(*)}$ for $\ell(*)\in \{1,2\}$, where: 
\begin{enumerate}
\item[$(A)_1$] $\aleph_0<\cf(\k)\le \kappa<
\theta=\theta^{<\k}$,  $\chi\ge \l\ge \beth_\om(\k)$ and 
there exists a  $\k$-family $\mathcal A\su 
[\chi]^\l$ of cardinality $|\mathcal A|\ge \chi^+$. 
\item[$(A)_2$] $\aleph_0<\cf(\k)\le\k<\theta=\theta^{<\k}$ and 
for every  $n<\om$ there are 
  $\chi_n > \l_n \ge \beth_n(\theta)$ 
a $\k$-family $\mathcal A_n\su [\chi_n]^{\l_n}$
 of cardinality $|\mathcal A_n|\ge\chi_n^+$
and $ \lambda _n \notin [ \beth _\omega ( \theta ) 
, \beth _{\theta +1 } ( \theta ) ] $ 
\item[$(B)_1$] For some forcing notion $\mathbb P$ 
not adding new sequences of ordinals
 of length $<\theta$, it holds that:
\begin{itemize} 
\item $(\beth_\om(\theta))^{\mathbf V^{\mathbb P}}
=(\beth_\om(\theta))^{\mathbf V}$. 
\item There exists a graph $G$ with list-chromatic 
number $\theta$ and coloring number $>(\beth_\om(\theta))^+$. 
\end{itemize}
\item[$(B)_2$]  Like $(B)_1$ with the coloring number $\ge (\beth_\om(\theta))^+$. 
\end{enumerate}
\end{theorem}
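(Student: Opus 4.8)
The plan is to carry out the finite-stage construction behind Claim \ref{AtoB} and Corollary \ref{twoten} through $\om$ levels rather than $n$, as the sentence introducing the theorem promises. The $\k$-family $\mathcal A$ is used only to produce the graph $G_{\mathcal A}$, whose coloring number is already large by $(*)_0$: with the parameters of $(A)_1$ the bipartite graph $G_{\mathcal A}$ on $\mathcal A\cup pt(\mathcal A)$ has coloring number at least $\chi^+$, which the cardinal setup pushes above $(\beth_\om(\theta))^+$, and this depends only on the incidence pattern of $\mathcal A$, which a sufficiently complete forcing will not touch. The entire work is therefore to force the \emph{list}-chromatic number of a suitable closed subgraph of $G_{\mathcal A}$ down to $\theta$ while keeping $\beth_\om(\theta)$ fixed and adding no short sequences.

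First I would fix an increasing $\om$-sequence of regular cardinals $\mu_\ell$ threading the $\beth$-hierarchy exactly as in $(*)_2$--$(*)_3$ of Claim \ref{AtoB}, say $\mu_0=\theta^+$ and $\mu_\ell=(\beth_{2\ell}(\theta))^+$, so that $(\forall\a<\mu_\ell)(|\a|^\k<\mu_\ell)$ holds at every level and $\sup_\ell\mu_\ell=\beth_\om(\theta)$. I would then build $\mathbb P$ as the $\om$-length analogue of $\mathbb Q_* * \mathbb P_{n+1}$: a full-support product of the Levy collapses $\mathbb Q^*_\ell=Levy(\mu_\ell,2^{<\mu_\ell})$ arranging the cardinal arithmetic, followed by an iteration whose $\ell$-th factor forces $2^{\mu_\ell}=\mu_{\ell+1}$ together with the internal forcing axiom of \cite{sh:546} for posets satisfying $*^\om_{\mu_\ell}$ and $<\mu_{\ell+1}$ dense sets, precisely as in clause $(e)$ of $(*)_7$ there. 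Since every factor is $\mu_\ell$-complete and $\mu_0=\theta^+$, the full-support product/iteration is $\theta^+$-complete, so $\mathbb P$ adds no new sequence of ordinals of length $<\theta$; in particular $\mathcal A$ stays a $\k$-family and $G_{\mathcal A}$ retains its coloring number.

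Working now in $\mathbf V^{\mathbb P}$, I would apply the Step-up Lemma \ref{stepup} $\om$ many times, exactly as the induction of Corollary \ref{twoten} is iterated for finite $n$: starting from the trivial $Pr_{\theta,\k}(\mu_0)$, each step feeds hypotheses $(1)$--$(4)$ of \ref{stepup}---the forcing axiom at level $\mu_\ell$, the arithmetic $|\a|^\k<\mu_\ell$ below $\mu_\ell$ and $\mu_{\ell+1}$, and $Pr_{\theta,\k}(\mu_\ell)$---to yield $Pr_{\theta,\k}(\mu_{\ell+1})$. Because $\beth_\om(\theta)=\sup_\ell\mu_\ell$, any closed subgraph of size $<\beth_\om(\theta)$ already has size $<\mu_\ell$ for some $\ell$, so $Pr_{\theta,\k}(\beth_\om(\theta))$ is immediate once all the $Pr_{\theta,\k}(\mu_\ell)$ are in hand; this is where the extra hypothesis $\aleph_0<\cf(\k)$ of $(A)_{\ell(*)}$ is used, via Claim \ref{3A}, since $\cf(\om)=\om\ne\cf(\k)$ guarantees that the increasing $\om$-unions of $\mathcal A$-closed sets arising in the argument stay closed. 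Choosing inside $G_{\mathcal A}$ a closed subgraph of size $<\beth_\om(\theta)$ that realizes list-chromatic number exactly $\theta$ while retaining coloring number $>(\beth_\om(\theta))^+$ then gives the graph witnessing $(B)_1$. For $(A)_2\imply(B)_2$ I would instead run the same forcing over the sequence $\lng\mathcal A_n:n<\om\rng$ and take the disjoint sum $\bigsqcup_n G_{\mathcal A_n}$: each summand contributes coloring number above $\beth_n(\theta)$ and list-chromatic number $\theta$, so the sum has list-chromatic number $\theta$ and coloring number $\ge(\beth_\om(\theta))^+$---a supremum across levels, which is exactly why $(B)_2$ records the non-strict bound in place of the strict bound of $(B)_1$.

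The main obstacle, and the only place where the passage from $n$ to $\om$ is genuinely new, is the \emph{simultaneous} preservation under the single $\om$-length forcing $\mathbb P$ of (i) the value of $\beth_\om(\theta)$ and of cofinally many cardinals below it, so that $(\beth_\om(\theta))^{\mathbf V^{\mathbb P}}=(\beth_\om(\theta))^{\mathbf V}$, and (ii) the forcing axiom $*^\om_{\mu_\ell}$ at \emph{every} level $\ell$, each of which must survive all later, more complete stages of the iteration. In Claim \ref{AtoB} these are subsumed under the remark that there is ``no problem to carry the induction''; for the $\om$-iteration one must check that the tail beyond stage $\ell$ is $\mu_{\ell+1}$-complete enough not to destroy the level-$\ell$ axiom and that the product of collapses neither collapses $\beth_\om(\theta)$ nor alters the continuum function cofinally below it in a way that moves this supremum. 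Once the iteration is organized with the increasing completeness described above, these preservation facts follow from the usual chain-condition and completeness bookkeeping, and everything else is the mechanical $\om$-fold repetition of Lemma \ref{stepup}.
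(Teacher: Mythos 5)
Your overall architecture (run the machinery of Claim \ref{AtoB} and Corollary \ref{twoten} through $\om$ levels, then take a disjoint sum of the $G_{\mathcal A_n}$ for $(B)_2$) matches the paper, but there is a fatal gap at the decisive step. You stop the $Pr_{\theta,\k}$ hierarchy at $\sup_\ell\mu_\ell=\beth_\om(\theta)$ and then propose to ``choose inside $G_{\mathcal A}$ a closed subgraph of size $<\beth_\om(\theta)$ that realizes list-chromatic number exactly $\theta$ while retaining coloring number $>(\beth_\om(\theta))^+$.'' That is impossible: the coloring number of a graph never exceeds its number of vertices (any well-ordering of type $|V|$ witnesses this), so a graph with $<\beth_\om(\theta)$ vertices has coloring number $<\beth_\om(\theta)$. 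The witnessing graph for $(B)_1$ must have more than $(\beth_\om(\theta))^+$ vertices, so you need $Pr_{\theta,\k}(\mu)$ for $\mu$ strictly beyond $(\beth_\om(\theta))^+$ --- i.e., you must prove $\theta$-choosability of closed subgraphs of size $\ge\beth_\om(\theta)$, which your construction never addresses. Relatedly, your remark that $Pr_{\theta,\k}(\beth_\om(\theta))$ ``is immediate'' via Claim \ref{3A} and $\aleph_0<\cf(\k)$ conflates two steps: $Pr$ at the supremum is trivial (a set of size $<\sup_\ell\mu_\ell$ has size $<\mu_\ell$ for some $\ell$) and needs neither hypothesis, whereas the step you actually need --- handling closed subgraphs of size \emph{exactly} $\beth_\om(\theta)$ --- is genuinely nontrivial, since list-chromatic number is not continuous under increasing unions of subgraphs.

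The paper fills precisely this hole with two moves absent from your proposal. First, in $(*)_5$ the forcing is arranged to do more than your $\om$-length iteration: it also forces $2^{\beth_\om(\theta)}=(\beth_\om(\theta))^+$ and the forcing axiom $*^\om_{\theta_\om^+}$ (at the level $\theta_\om^+=(\beth_\om(\theta))^+$, which your iteration never reaches) for $<\mu_{\om+1}$ dense sets, where $\mu_{\om+1}=2^{\mu_\om}>\sum_n\chi_n$. Second, in Stage C the passage across the singular is done by a compactness-for-singulars argument (in the spirit of \cite{sh:52}, \cite{sh:266}), which is where $\aleph_0<\cf(\k)$ and Claim \ref{3A} are really used (unions of $\om$-chains of $\mathcal A$-closed sets stay closed because $\cf(\om)\not=\cf(\k)$); this yields $Pr_{\theta,\k}(\mu_\om)$ with $\mu_\om=(\beth_\om(\theta))^+$, and then one \emph{further} application of the Step-up Lemma \ref{stepup}, using the extra axiom from $(*)_5$, gives $Pr_{\theta,\k}(\mu_{\om+1})$, large enough to cover the actual witnessing graphs of size up to $\sum_n\chi_n$. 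You also omit the paper's reduction of $(A)_1$ to $(A)_2$ (constant sequences) and its case analysis $u_1,u_2,u_3$ according to whether $\l_n$ is below, equal to, or above $\beth_\om(\theta)$ --- the cases $\mathbf i\in\{2,3\}$ are exactly those where the families live at or above $\beth_\om(\theta)$ and $(*)_5$ is indispensable, so this case split is not cosmetic. Your final paragraph worries about preservation issues in the iteration, which is legitimate but secondary; the missing step-up past the singular is the real obstruction.
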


 \begin{proof}
 
 Stage A. For $(A)_1\imply (B)_1$  assume $(A)_1$ and 
 let 

\begin{itemize}
\item[$ (*)_1$]
$(\chi_n,\l_n)=(\chi,\l)$, $\mathcal A_n=\mathcal A$,
 so we can assume  $(A)_2$. 
\end{itemize}

 \begin{itemize}
 \item[$(*)_2$] Let
 $u_1=\{n: \l_n<\beth_\om(\theta)\}$
hence $ n \in u_1
\imply 
\lambda _n 
 < \beth _ \omega )\theta )$  
 and let 
 $u_3=\{n:\l_n>\beth_\om(\theta)\}$. 
 \end{itemize} 

Recalling clause $ (A)_2 $  
note that $ u_1, u_3 $ is a partition 
of $ \omega $. 
 
 \begin{itemize}
 \item[$(*)_3$] Without loss of generality,  for some $\mathbf i \in \{1,2,3\}$ we have:
 \begin{itemize}
 \item[(a)] $u_{\mathbf i}=\om$. 
 \item[(b)] If $\mathbf i=3$  without loss of generality there is some $\l_*>\beth_\om(\theta)$
 such that $\bigwedge_{n} \l_n=\l_*$. 
 \item[(c)] If $\mathbf i=2$  let $\mu_*=\beth_\om(\theta)$. 
 \end{itemize}
 \end{itemize} 
 
 Stage B. Now
 \begin{itemize}
 \item[$(*)_4$] Without loss of generality  there is a sequence $\lng \mu_n:n<\om\rng$ such that 
 \begin{itemize} 
 \item[(a)] $\mu_0=\theta^+$. 
\item[(b)] $\mu_n=\cf(\mu_n)$.
\item[(c)] $2^{\mu_n}=\mu_{n+1}$ .
\item [(d)] Hence $\sum_n\mu_n=\beth_\om(\theta)$.
 \item[(e)] The forcing axiom $*^\om_{\mu_n}$ and $<\mu_{n+1}$ dense sets holds. 
 \end{itemize}
 \end{itemize} 
 
 Why? As in the proof of \ref{AtoB},
but note 
 that the forcing may (in fact do) 
collapse $ \beth _{\omega + 1 } $ 
to $ \beth _\omega ( \omega ) ^+ $.  
Also in the case $ {\bf i} =1 $ letting 
wlog $ \lambda_n \in [\beth _{k(n)},  
\beth _{k(n)+1 } ( \omega ) ) $     
wlog $ k(n) $ is increasing  	
and $ k(n + 1  ) > k(n)+n  $. 

 \begin{itemize}
 \item[$(*)_5$] Without loss of generality, 
in addition, letting $\theta_\om=(\beth_\om(\theta))$,
 we have $2^{\theta_\om}=\theta_\om^+$ 
and $\mu_{\om+1}=2^{\mu_\om}$ is $>\sum_n \chi_n$ 
 and as in $(*)_4(e)$  the forcing axiom
 $*^\om_{\theta_\om^+}$ and $<\mu_{\om+1}$ dense
  sets holds. 
 \end{itemize} 

Stage C.   We deal with the case $\mathbf i=1$. 

By \ref{twoten}, for every $n$,  
$Pr_{\theta,\k}(\mu_n)$ holds. By easy compactness for singulars 
argument we have, as $\aleph_0<\cf(\theta_*)$,
 also $Pr_{\theta,\k}(\mu_\om)$.

Now clearly for each $n$, 
$ \miq_n < \xig_n $ 	
 as in the proof of Theorem 1, there is a graph $G_n$ 
with $|\mathcal A_n|$ vertices, coloring number $\ge \l_n$ and list-chromatic number $\theta$. 

Taking then the disjoint sum of all $G_n$ we have established $(A)_2\imply (B)_2$. 

Stage D. $\mathbf i\in \{3\}$. 
 Similarly, but we use $(*)_5$.  

 \end{proof}  

\noindent
\textbf{Remark}: We can replace $\beth_\om(\theta)$ 
with $\beth_{\d(*)}(\theta)$ when $\d(*)<\cf(\k)$.

 \begin{proof}[Proof of Theorems 1 and 2] 
The proofs consists of combining the lemmas above. 
\end{proof}

 \medskip
 We conclude with a few simple implications that are needed above. 
 
 \begin{claim} \label{comb} Assume that $\theta$ 
is a regular cardinal and $2^\k\le \theta\le \l$. 
 We have $(a)_{\l,\theta,\k}\imply (b)_{\l,\theta,\k}
 \imply (c)_{\l,\theta,\k}\imply 
 (d)_{\l,\theta,\k}$. If, in addition,
 $\theta=\theta^\k$ (or just 
$ \partial  < \theta \imply \partial ^\kappa < \lambda $ 
 then $(d)_{\l,\theta,\k}\imply 
 (e)_{\l,\theta,\k}\imply (f)_{\l,\theta,\k}$,

  Where
  
\begin{itemize} 
\item[$(a)_{\l,\theta,\k}$]  $\l$ is minimal 
such that there is a graph 
$G$ with $\l$ 	vertices,
coloring number 
$ \ge  \theta $ and 
list-chromatic number $\le \k$. 

\item[$(b)_{\l,\theta,\k}$] $\l$ is regular 
and there is a graph $G$ with $\l$ vertices,
coloring number $\ge \theta$, every sub-graph 
of $G$ with $<\l$ vertices has coloring
number $\le \theta$ and the complete bipartite 
graph $K(\k,2^\k)$ is not weakly 
 embeddable into $G$. 

\item[$(c)_{\l,\theta,\k}$] $\l>\theta$ is 
regular and there is $\ov C$ such that:
\begin{enumerate}
\item[$({\mathbf \a})$] $\ov C =\lng C_\d:\d\in S\rng$ 
\item[$({\mathbf \b})$] $S\su \{\d: \d<\l \wedge 
\cf(\d)=\theta\}$ is stationary.
\item[$({\mathbf \gamma})$] $C_\d\su \d$ and $\otp(C_\d)=\theta$.
\item[$({\mathbf \d})$] If $u\in [\l]^\k$ then 
$\{\d\in S: u \su C_\d\}$ is bounded in $\l$. 
\end{enumerate}
\item[$(d)_{\l,\theta,\k}$] $\l>\theta$ is 
regular and for some $\mu<\l$ for 
every $\partial  \in [\k,\theta)$  
there is $\mathcal A\su [\mu]^\partial $ 
of cardinality $\l$
such that $u\in [\mu]^\k\imply (\exists^{<\l}
 v\in \mathcal A)(u \subseteq v)$ 	
\item[$(e)_{\l,\theta,\k}$] $\l>\theta$ is 
 regular and there are $\mu<\l$ and 
$\{A_\D:\D\in [\k,\theta)\}$ 
such that $A_\D\su [\mu]^\D$ 
 is a $\k$-family of cardinality $\l$. 	
\item[$(f)_{\l,\theta,\k}$] $\l>\theta$ is
 regular and there are $\mu<\l$ and 
$\{\mathfrak a_\D: \D\in [\k,\theta)\}$ such that 
$\mathfrak a\su \Reg\cap (\mu\sm \theta)$,  
 $|\mathfrak a_\D|=\D$ and  
$(\prod \mathfrak a_
\D, <_{[\mathfrak a_\D]^{<\k}})$ is $\l$-directed. 
\end{itemize}
 \end{claim}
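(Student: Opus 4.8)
The plan is to prove the displayed chain of implications one link at a time, handling $(a)\imply(b)\imply(c)\imply(d)$ by purely combinatorial means and the final links $(d)\imply(e)\imply(f)$ by a greedy thinning followed by the pcf input of \cite{sh:410}, 6.1. For $(a)\imply(b)$ I would first use that both the list-chromatic number $\Li$ and the coloring number $\col$ are monotone under subgraphs: the minimal witness $G$ has $\Li(G)\le\k$ inherited by every subgraph, and by minimality of $\l$ every subgraph of size $<\l$ has coloring number $\le\theta$. Regularity of $\l$ is then forced by compactness of the coloring number in singular cardinals (\cite{sh:266}, \cite{sh:668}): if $\l$ were singular, all $<\l$-subgraphs having coloring number $\le\theta$ would yield $\col(G)\le\theta$, contradicting $\col(G)>\theta$ (so we in fact get strict inequality, which is what is needed below). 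Finally $K(\k,2^\k)$ is not weakly embeddable into $G$, since a weak embedding would force $\Li(G)\ge\Li(K(\k,2^\k))>\k$; this is the content I would cite from \cite{LeSh:527}.

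The main obstacle is $(b)\imply(c)$, i.e. extracting a non-reflecting guessing sequence from incompactness of the coloring number at the regular cardinal $\l$. I would identify $V$ with $\l$ and run a continuous increasing chain $\lng M_\a:\a<\l\rng$ of elementary submodels of some $(H(\chi),\in)$ with $G\in M_0$, $|M_\a|<\l$ and $M_\a\cap\l\in\l$, so that $C=\{\d:M_\d\cap\l=\d\}$ is a club. Because $\col(G)>\theta$ while every $G\rest(M_\d\cap V)$ has coloring number $\le\theta$, the enumeration of $V$ read off along the chain cannot witness $\col(G)\le\theta$; hence the set of $\d$ admitting a vertex $x_\d\notin M_\d$ with at least $\theta$ neighbors inside $M_\d\cap V$ is stationary. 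Intersecting with $S^\l_\theta$ and using $\cf(\d)=\theta$ to spread these neighbors cofinally below $\d$, I obtain a stationary $S\su S^\l_\theta$ and, for each $\d\in S$, a set $C_\d\su N(x_\d)\cap\d$ with $\otp(C_\d)=\theta$. Clause $({\mathbf\d})$ is exactly where non-embeddability of $K(\k,2^\k)$ enters: if some $u\in[\l]^\k$ had $u\su C_\d$ for unboundedly many $\d$, then since $\l=\cf(\l)\ge2^\k$ there would be $2^\k$ many distinct $x_\d$ each joined to all of $u$, a weak copy of $K(\k,2^\k)$ in $G$, contradiction.

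For $(c)\imply(d)$ I would fix, for each cardinal $\D\in[\k,\theta)$, the set $b_\d$ of the first $\D$ elements of $C_\d$; as $\D<\theta=\cf(\d)$ we have $\sup b_\d<\d$, so $\d\mapsto\sup b_\d$ is regressive on $S$, and Fodor yields a stationary $S_\D\su S$ and $\g_\D<\l$ with $b_\d\su\g_\D$ for $\d\in S_\D$. Putting $\mu=\sup_{\D}\g_\D+1$, a supremum of at most $\theta<\l$ ordinals below the regular $\l$, we get $\mu<\l$ and $\mathcal A_\D=\{b_\d:\d\in S_\D\}\su[\mu]^\D$. This family has $\l$ distinct members, for otherwise one value would repeat on an unbounded set of $\d$ and any $\k$-subset of it would violate $({\mathbf\d})$; moreover each $u\in[\mu]^\k$ is a subset of fewer than $\l$ members, since $u\su b_\d\imply u\su C_\d$ and such $\d$ are bounded. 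This is the usable form of property $(d)$.

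Under the extra hypothesis ($\theta=\theta^\k$, or just $\mu<\theta\imply\mu^\k<\theta$ together with $\D<\theta\imply\mu^\D<\l$) I would get $(d)\imply(e)$ by a greedy recursion of length $\l$: having chosen $\lng v_\xi:\xi<\zeta\rng$ with pairwise intersections $<\k$, the sets excluded by a single $v_{\xi'}$ are those containing one of its $\D^{\k}<\l$ many $\k$-subsets, each contained in fewer than $\l$ members of $\mathcal A_\D$; as $\l$ is regular and $|\zeta|<\l$, fewer than $\l$ members are excluded altogether, leaving one to be taken as $v_\zeta$. The resulting $A_\D\su[\mu]^\D$ is a $\k$-family of size $\l$, which is $(e)$. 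Finally $(e)\imply(f)$ is the pcf upgrade of a large $\k$-family to a product of high directedness: from the $\k$-family $A_\D\su[\mu]^\D$ of size $\l=\cf(\l)>\mu$ one reads off, exactly as in the passage $(st)^1\imply(st)^2$ of \cite{sh:410}, 6.1 (whose cardinal-arithmetic side condition is supplied by $\mu^\D<\l$), a set $\mathfrak a_\D\su\Reg\cap(\mu\sm\theta)$ of $\D$ regular cardinals with $(\prod\mathfrak a_\D,<_{[\mathfrak a_\D]^{<\k}})$ being $\l$-directed. I expect $(b)\imply(c)$ to be the genuinely delicate link; the remaining steps are Fodor-style bookkeeping, a greedy construction, and the cited pcf theorem.
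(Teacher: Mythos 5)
Your route coincides with the paper's own (very compressed) proof at every link: minimality of $\l$ plus singular compactness for the coloring number (\cite{sh:52}, \cite{sh:266}) for $(a)_{\l,\theta,\k}\imply(b)_{\l,\theta,\k}$ --- including your correct observation that one actually gets, and later needs, coloring number $>\theta$ rather than the $\ge\theta$ literally written in $(b)$; a stationary set of ``trapped'' ordinals for $(b)\imply(c)$ with clause $({\mathbf \d})$ extracted from the non-embeddability of $K(\k,2^\k)$ (using $\l\ge(2^\k)^+$ and that the witnesses $x_\d\ge\d$ are distinct for unboundedly many $\d$); Fodor for $(c)\imply(d)$; the $\D^\k<\l$ counting for $(d)\imply(e)$, where the paper thins along a club of indices instead of your greedy recursion of length $\l$ --- an immaterial difference, the counting is identical; and the citation of \cite{sh:410}, 6.1 for $(e)\imply(f)$.

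There is, however, one genuine gap, located exactly at the step you yourself flag as delicate. In $(b)\imply(c)$ you write that the set of $\d$ admitting $x_\d\notin M_\d$ with $\ge\theta$ neighbors in $M_\d$ is stationary, and then proceed by ``intersecting with $S^\l_\theta$''. That intersection step is unjustified: stationarity of a set does not pass to its intersection with a fixed cofinality class, and nothing in the chain argument places the bad points at cofinality $\theta$; as far as your argument shows, $T\cap S^\l_\theta$ could be nonstationary or even empty, in which case you have no $S$ satisfying clause $({\mathbf \b})$. What is true is a local reflection: if $x\ge\d$ has $|N(x)\cap\d|\ge\theta$, then the minimal $\d^*\le\d$ with $|N(x)\cap\d^*|\ge\theta$ has $\cf(\d^*)=\theta$ (regularity of $\theta$), but $\d^*$ need not lie in the club being tested. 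The repair is a dichotomy. Either $\{\d\in T:\d^*(\d)=\d\}$ is stationary, in which case it is contained in $S^\l_\theta$ and your construction goes through verbatim; or $\d\mapsto\d^*(\d)$ is regressive on a stationary set, and Fodor yields a fixed $\g^*<\l$ together with $\l$ many distinct vertices $x$ (distinct since $x_\d\ge\d$) each having $\ge\theta$ neighbors inside $\g^*$. In the second case take $S=S^\l_\theta\sm(\g^*+1)$, assign to each $\d\in S$ injectively one such $x$, and let $C_\d\su N(x)\cap\g^*$ have order type $\theta$; clause $({\mathbf \d})$ then follows from $K(\k,2^\k)$-freeness exactly as in your own boundedness argument. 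Note that clause $({\mathbf \gamma})$ only requires $C_\d\su\d$ with $\otp(C_\d)=\theta$, not cofinality in $\d$, so your ``spreading the neighbors cofinally below $\d$'' is unnecessary --- and indeed unavailable in the second case. With this repair your proof matches the paper's, which compresses the entire issue into the citation of \cite{sh:52}, \cite{sh:266} and the phrase ``by renaming''.
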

 
 \begin{proof}
$(a)_{\l,\theta,\k}\imply (b)_{\l,\theta,\k}$. 
Choose $G$ witnessing $(a)_{\l,\theta,\k}$.
 We know that $\l$ is regular, and without loss of
generality the vertex set of the graph is $\l$. 
The coloring number is $\ge \theta$ by the 
choice of $G$.  If $H\su G$ has fewer than
 $\l$ vertices then it has coloring number 
$<\theta$  by the minimality of $\l$ . Also the 
complete bipartite graph $K(\k,2^\k)$ 
and even $ K(\kappa , \kappa^+)$ 
is not
 weakly embedded in $G$ because its list-chromatic
 number is $\k^+$ and $\l>2^\k$
and even just $ \lambda > \kappa^+ $. 
Minimality of $\l$ gives more. 
So $(b)_{\l,\theta,\k}$ holds. 

$(b)_{\l,\theta,\k}\imply (c)_{\l,\theta,\k}$.  
See \cite{sh:52} or \cite{sh:266}. 
Assume that the vertex set is $\l$
and let $S=\{\d: (\exists \a\ge \d)(|G[\a]
\cap\d|\ge \theta\}$
where $ G[ \alpha ] =  
\{ \beta : ( \alpha, \beta ) \text{ 
is an edge of }G \} $. 
If $S$ is not stationary then 
using "every subgraph with $<\l$ vertices has
 coloring number $\le\theta$" we conclude 
that $G$ has coloring number $\le \theta$.  
By renaming we get $(c)_{\l,\theta,\k}$.

$(c)_{\l,\theta,\k}\imply (d)_{\l,\theta,\k}$. 
  For each $\D\in [\k,\theta)$ we find, by 
Fodor's lemma, $\a_\D< \lambda $ 
such that 
$\mathcal A_ \partial 
=\{\d\in S:
 |C_\D\cap \a_\D|\ge\D\}$ has cardinality $\l$. 
So $\a_*=\bigcup_\D\a_\D<\l$ satisfies the desired
conclusion for 
the $ \mu $ that is defined as 
 $\mu=|\a_*| $   
so by renaming we are done. 

$(d)_{\l,\theta,\k}\imply (e)_{\l,\theta,\k}$. When, e.g., 
$\partial <\theta\imply \partial^\k<
\lambda $ 
for each $\partial \in [\k,\theta)$ let 
$\lng u_{\partial,\a}:\a<\l\}$ list 
$\mathcal A_ \partial $, 
 and for $\a<\l$ let
 $W_\a=\{\b<\l: |u_{\g,\b}\cap 
u_{\partial,\a}|\ge \k\}$. As
$ | [ u_{\partial , \alpha } ]^\kappa | 
\le \partial < \lambda = \cf( \lambda ) $, 
 the 
set $W_\partial$ is bounded in $\l$, 
 hence for some club $E_\partial\su \l$ 
it holds that $\a<\b\in E_\partial \imply 
 |u_{\g,\a}\cap u_{\partial,\b}|<\k$,
 so $\{u_{\partial,\a}:\a\in E_\partial\}$ is as required. 
 
 $(e)_{\l,\theta,\k}\imply (f)_{\l,\theta,\k}$ if $\partial<\theta\imply \partial^\k<\l$.
By \cite{sh:410} 6.1.  
 
 \end{proof} 
 
%
%
%
%
 
%
%


\begin{thebibliography}{10}
 
 
\bibitem{A} N. Alon.
{\sl Degrees and choice numbers.} 
Random Structures Algorithms 16 (2000), no. 4, 364--368. 

\bibitem{eh} P. Erd\H os and A. Hajnal. {\sl
    On chromatic number of
    graphs and set systems.} Acta Math. Acad. Hung. 17 (1966) 61--99.

\bibitem{ERT} P. Erd\H os, A. L. Rubin and H. Taylor. {\sl Choosability in graphs.}  Proc. West Coast Conf. on Combi- natorics, Graph Theory and Computing, Congressus Numerantium XXVI, 1979, 125-157.

\bibitem{Git15} M. Gitik. {\sl Short Extenders Forcing II.}, preprint.

 \texttt{http://www.math.tau.ac.il/$\sim$gitik/somepapers.html}

 \bibitem{GiSh:1013} M. Gitik and S. Shelah. {\sl Applications of pcf for mild large cardinals to elementary embeddings .} Annals Pure and Applied Logic 164 (2013) 855--865.
 
 
 \bibitem{kom} P. Komjath. {\sl The list chromatic number of infinite graphs.} Israel J. Math. (2014) 67--94. 
 
 \bibitem{koj} M. Kojman. {\sl Shelah's Revised GCH theorem and a question of Alon on Infinite Graphs Colorings.} Israel. J. Math. (2014) 

\bibitem{LeSh:527} S.  Lifsches and S. Shelah. {\sl  Random Graphs in the monadic theory of order.} Archive for Math Logic 38 (1999) 273--312.

 \bibitem{sh:52}  S. Shelah. {\sl  A compactness theorem for singular cardinals, free algebras, Whitehead problem and transversals.} Israel J Math 21 (1975) 319-349
 
 
 \bibitem{sh:266} S. Shelah. {\sl  Compactness in singular 
cardinals revisited},
266 in Shelah-s list, preprint.  
arxiv:math.LO/1401.3175  
 
 \bibitem{sh:410}  S. Shelah. {\sl More on Cardinal Arithmetic .} Archive for Math 
 Logic 32 (1993) 399--428. 
 
  \bibitem{sh:420} S. Shelah. {\sl Advances in Cardinal Arithmetic.}, Finite and Infinite Combinatorics in Sets and Logic 
NATO Adv. Sci. Inst. Ser. C Math. Phys. Sci., 411, Kluwer Acad. Publ., Dordrecht, 1993. 
  (1993) 355-383
  
 \bibitem{sh:460} S. Shelah. {\sl The Generalized Continuum Hypothesis revisited.} Israel J. Math 116 (2000) 285--321.
 
 
\bibitem{sh:546} S. Shelah. {\sl Was Sierpi\'nski right? IV.} J. Symbolic Logic 65 (2000) 1031--1054.

\bibitem{sh:668} S. Shelah. {\sl Anti--homogeneous Partitions of a Topological Space.} Scientiae Math. Japonicae 59, No. 2; (special issue:e9, 449--501) (2004) 203--255.

\bibitem{sh:824} S. Shelah. {\sl Two cardinals models with gap one revisited .} Math Logic Quarterly 51 (2005) 437--447.

\bibitem{sh:898} S. Shelah. {\sl  pcf and abelian groups.} Forum Math.  
25(2013)967-1048 

\bibitem{sh:1036} S. Shelah. 
{\sl Forcing axioms for
 $ \lambda $-complete $\mu ^+ $-c.c.}; 
1036 in Shelah-s list;
 preprint. 
 \end{thebibliography}
\end{document}